\documentclass[a4paper,american,11pt]{amsart}

\usepackage{amsmath}
\usepackage{amssymb}
\usepackage{caption}
\usepackage{subcaption}
\usepackage{amsfonts}
\usepackage{amscd}
\usepackage{wrapfig}
\input xy
\xyoption{all}

\usepackage{listings} 
\usepackage[usenames]{color} 
\usepackage{colortbl}
\definecolor{cobalt}{rgb}{0.0, 0.28, 0.67}

\usepackage{hyperref}
\usepackage{tikz-cd}
\usepackage{tikz}
\usetikzlibrary{positioning,calc}
\usepackage{pgfplots}
\pgfplotsset{compat=1.16}

\usepackage{tikz-qtree}
\usetikzlibrary{patterns}

\newtheorem{definition}{Definition}
\newtheorem{lemma}{Lemma}

\newtheorem{theorem}{Theorem}
\theoremstyle{definition}
\newtheorem{remark}{Remark}

\newcommand\ZZ{\mathbb Z}

\newcommand\CC{\mathbb C}
\newcommand\RR{\mathbb R}

\begin{document}

\author{Nikita Kalinin}

\title[Certain summations over Farey pairs]{Legendre duality for certain summations over the Farey pairs} 
\address{Guangdong Technion Israel Institute of Technology (GTIIT),
241 DaXue Road, Shantou, Guangdong Province 515603, P.R. China,  Technion-Israel Institute of Technology, Haifa, 32000, Haifa District, Israel, nikaanspb@gmail.com}
\maketitle

\begin{abstract}
Each irreducible fraction $p/q>0$ corresponds to a primitive vector $(p,q)\in\mathbb Z^2$ with positive coordinates. Such a vector $(p,q)$ can be uniquely written as the sum of two primitive vectors $(a,b),(c,d)\in\ZZ_{\geq 0}^2$ spanning a parallelogram of oriented area one. 

We present new summation formulas over the set of such parallelograms. These formulas depend explicitly on $a,b,c,d$ and thus define a summation over primitive vectors $(p,q)=(a+c,b+d)$ indirectly. Equivalently, these sums may be interpreted as running over Farey pairs, i.e.  pairs of fractions $0\leq c/d<a/b\leq 1$ satisfying $ad-bc=1$. 

The input for our formulas is the graph of a strictly concave function $g$. The terms are the areas of certain triangles formed by tangents to the graph of $g$.  Several of these formulas for different $g$ yield values involving $\pi$. For $g$ a parabola we recover the classical Mordell-Tornheim series (also called the Witten series). As a nice application we also discuss formulas for continued fractions for an arbitrary real number $\alpha$ involving coefficients of the continued fraction and the differences between the convergents and $\alpha$.

Using Hata's work, we show that the terms in the above formulas are the coefficients of the Legendre transform of $g$ in a certain Schauder basis, allowing us to interpret our formulas as Parseval-type identities. We hope that the Legendre duality sheds new light on Hata's approach.

Raising the terms in the above summation formula to the power $s$ we obtain a function $F_g(s)$. We prove that  for a strictly concave $g$, the function $F_g(s)$ converges for $s>2/3$ and diverges at $s=2/3$.  

\end{abstract}

\section{Introduction}

Among other results, in this article we prove the following identity:

\begin{equation}
\label{eq_1}
4\sum\limits^{+} \left(a \cdot\mathrm{arctan}\left(\frac{a}{b}\right)+ c\cdot \mathrm{arctan}\left(\frac{c}{d}\right)-(a+c)\cdot\mathrm{arctan}\left(\frac{a+c}{b+d}\right)\right)^2=\pi.
\end{equation}

Throughout the paper, the notation $\sum\limits^{+}$ means that the quadruples $(a,b,c,d)$ range over all $4$-tuples with $a,b,c,d\in\ZZ_{\geq 0}$ satisfying $ad-bc=1$.  That is, the sum is taken over all area-one parallelograms with vertices $(0,0),(a,b),(c,d),(a+c,b+d)\in \ZZ_{\geq 0}^2$. Since $ad-bc=1$, each such parallelogram corresponds to a matrix $$\begin{pmatrix} a \ b\\ c \ d \end{pmatrix} \in SL(2,\ZZ)$$ with non-negative entries. We refer to the set of such matrices  as {\it the positive part} of $SL(2,\ZZ)$, denoted by $SL_{+}(2,\ZZ)$.

Understanding ``the positive part'' of $SL(2,\ZZ)$ may bring new information about Farey fractions and primitive vectors in $\ZZ_{\geq 0}^2$. 

For $k\geq 1$, let $\operatorname{Farey}_k$ denote the Farey sequence of order $k$, i.e.,  the set of all irreducible fractions in the lowest terms in $[0,1]$ with denominators at most $k$, arranged in the ascending order.  Each primitive vector $(p,q)$ in $\ZZ^2_{\geq 0}$, distinct from $(1,0),(0,1)$, admits a unique decomposition $(p,q)=(a,b)+(c,d)$ with $$\begin{pmatrix} a \ b\\ c \ d \end{pmatrix} \in SL_{+}(2,\ZZ).$$ If $(p,q)$ is a primitive vector with $p\leq q$, then finding such a decomposition is equivalent to expressing the fraction $p/q$ as the mediant of two consecutive fractions $c/d<a/b$ in $\operatorname{Farey}_{q-1}$ since $$p/q = (a+c)/(b+d).$$ The pair of fractions $(c/d,a/b), ad-bc=1$ is called a {\it Farey pair}. Each Farey pair appears as two consecutive fractions in $\operatorname{Farey}_k$ for a certain $k$. For a historical background and modern developments related to Farey fractions and their connection to Diophantine approximation and Riemann hypothesis, see \cite{cobeli2003haros, kim2025metric,yoshimoto2004abelian}.

Sums of the form $\sum f(b,d)$ over Farey pairs $c/d,a/b\in \operatorname{Farey}_k$ have been studied in many works, including  \cite{gupta1964identity}, \cite{newman1969sums} (\cite{apostol} pp. 110-112), \cite{robertson1968sums},  \cite{hall1970note}, \cite{rieger1981farey},  \cite{kanemitsu1982some}, \cite{ishibashi1984farey}, \cite{rieger1984kreisfigur}, \cite{maxsein1985bemerkung}, \cite{hall1994consecutive},   \cite{hata1995farey}, \cite{aliev1998metric},  \cite{girstmair2010farey}, \cite{cobeli2010intervals},   \cite{chaubey2017correlations} for various functions $f$. Our formulas can also be interpreted as sums over Farey pairs. However, in contrast to previous works where the summands depend only on denominators $b,d$,  our terms also involve numerators $a,c$, leading to new identities.

As a particular example, consider the following function $f(a,b,c,d)$ for a Farey pair $c/d,a/b$:

\begin{equation}
\label{main}
f(a,b,c,d)=\sqrt{a^2+b^2}+\sqrt{c^2+d^2}-\sqrt{(a+c)^2+(b+d)^2},
\end{equation}
measuring the defect in the triangle inequality for the triangle with vertices $(0,0),(a,b),(a+c,b+d)$.

In the past, motivated by tropical sandpile caustics \cite{us_series} (see further exploration of tropical caustics in \cite{mikhalkin2023wave}), together with  M. Shkolnikov we had evaluated the sum of $f$ over $SL_{+}(2,\ZZ)$. We obtained the following result: 
\begin{theorem}[\cite{easter}]\label{thm_circle} The following two formulas hold: 
\begin{align}
&\sum\limits^{+} \left(\sqrt{a^2+b^2}+\sqrt{c^2+d^2}-\sqrt{(a+c)^2+(b+d)^2}\right) = 2, \label{eq_triangle}\\  
&\sum\limits^{+} \left(\sqrt{a^2+b^2}+\sqrt{c^2+d^2}-\sqrt{(a+c)^2+(b+d)^2}\right)^2 = 2-\pi/2.
\end{align}
\end{theorem}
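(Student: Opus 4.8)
The plan is to view $\sum^{+}$ as a sum over the nodes of the Stern--Brocot (Farey) tree. Its root is the pair $(v_1,v_2)=((1,0),(0,1))$, and a node $(v_1,v_2)$ with $v_1=(a,b)$, $v_2=(c,d)$, $ad-bc=1$, has exactly two children, obtained by the mediant subdivision: $(v_1,\,v_1+v_2)$ and $(v_1+v_2,\,v_2)$. Every quadruple counted by $\sum^{+}$ (equivalently, every matrix of $SL_{+}(2,\ZZ)$) occurs exactly once as such a node. Writing $w=v_1+v_2=(a+c,b+d)$ and abbreviating the summand as $f(v_1,v_2)=|v_1|+|v_2|-|w|\ge 0$, I will treat the two identities by separate but parallel uses of this self-similar structure. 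Before either, I would record the a priori bound $f(v_1,v_2)\le \frac{2}{|v_1|\,|v_2|\,(|v_1|+|v_2|)}$ (from $1-\sqrt{1-x}\le x$ and $1-\cos\Delta\le\sin^2\Delta=|v_1\times v_2|^2/(|v_1||v_2|)^2$), which makes $\sum^{+}f$ converge by comparison with $\sum_{v\ \mathrm{primitive}}|v|^{-3}<\infty$ and thus legitimizes the rearrangements below.

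For the first identity, the key step is a telescoping--plus--double-counting argument. Fix a primitive vector $w$ that arises as the mediant of a node $(v_1,v_2)$. The nodes that carry $w$ as one of their two endpoints form precisely two chains, $\{(v_1+kw,\,w)\}_{k\ge 0}$ and $\{(w,\,v_2+kw)\}_{k\ge 0}$, and along each chain the summand telescopes: e.g.\ $f(v_1+kw,w)=|v_1+kw|-|v_1+(k+1)w|+|w|$. Since $\langle w,\hat w\rangle=|w|$ and $|v_1+Kw|-K|w|\to\langle v_1,\hat w\rangle$ as $K\to\infty$ (with $\hat w=w/|w|$), the two chains sum to $|v_1|-\langle v_1,\hat w\rangle$ and $|v_2|-\langle v_2,\hat w\rangle$, whence the total of $f$ over all nodes having $w$ as an endpoint equals $|v_1|+|v_2|-\langle v_1+v_2,\hat w\rangle=|v_1|+|v_2|-|w|=f(v_1,v_2)$. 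The two axis directions $(1,0)$ and $(0,1)$ each occur as an endpoint of a single chain, and an identical computation gives total $1$ for each. Now set $S=\sum^{+}f$ and sum these endpoint totals over all endpoint vectors $w$: since every node has exactly two endpoints, the left-hand side equals $2S$, while the right-hand side equals $1+1+\sum_{w\ \mathrm{mediant}}f(\text{parent of }w)=2+S$ (each node is the parent of exactly one mediant). Hence $2S=2+S$, i.e.\ $S=2$.

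For the second identity I would pass to the circumscribed picture on the unit circle. To each direction $v$ attach the support line $t_v=\{x:\langle x,\hat v\rangle=1\}$, tangent to the circle at $\hat v$. Running over the frontier directions of the tree at a given stage, the lines $t_v$ bound a polygon circumscribing the quarter arc from $(1,0)$ to $(0,1)$. Inserting the mediant tangent $t_w$ cuts the corner $t_{v_1}\cap t_{v_2}$: a direct computation shows the corner slides along $t_{v_1}$ by $f(v_1,v_2)\,|v_1|$ and along $t_{v_2}$ by $f(v_1,v_2)\,|v_2|$, while the new edge carried by $t_w$ has length $f(v_1,v_2)\,|w|$, so the perimeter changes by exactly $-f|v_1|-f|v_2|+f|w|=-f(v_1,v_2)^2$. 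Summing over all refinements, the perimeter decreases from its initial value $2$ — the two unit tangent segments meeting at the corner $(1,1)$ — down to the quarter-arc length $\pi/2$; therefore $\sum^{+}f^2=2-\pi/2$. Equivalently, the excised corner triangles have area $\tfrac12 f^2$ and exhaust the region of area $1-\pi/4$ between the corner $(1,1)$ and the arc, which is the Parseval/area reading of the same fact.

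The step I expect to be the main obstacle is the rigorous justification of the two limiting procedures. For the first formula this is the absolute convergence that turns the formal double count into the clean relation $2S=2+S$; the bound on $f$ above is meant to dispatch it, but one must also confirm that the endpoint chains are disjoint and exhaust all occurrences of each $w$. For the second formula the crux is that the circumscribed polygons converge, monotonically from above, to the arc in length — equivalently that the Farey directions become dense on the arc — together with the disjointness and exhaustion needed for the area version. Each of these is standard but must be stated carefully, since the whole argument rests on interchanging an infinite sum with a geometric limit.
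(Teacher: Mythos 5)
Your argument is correct, but it is assembled from a different pair of mechanisms than the paper's primary proof. The paper proves both identities at once (Theorem~\ref{thm_short}, specialized to $g(x)=\sqrt{1-x^2}$) by a cut-and-paste argument: the triangles $\Delta_{a,b,c,d}$ of area $\tfrac12 f^2$ tile the region between the arc and the two initial tangents (giving $F(2)=2\cdot(1-\pi/4)$), while $F(1)$ is obtained by tracking the $SL(2,\ZZ)$-length of the circumscribing polyline, which drops by exactly $f$ at each carving and tends to $0$. Your first-identity proof is instead a telescoping-plus-double-counting argument along the two chains $\{(v_1+kw,w)\}$, $\{(w,v_2+kw)\}$ attached to each mediant $w$; this is essentially the paper's \emph{second} proof (the $S^L_{w}+S^R_{w}$ telescoping of Section~\ref{sec_3}), with your $\langle v_1,\hat w\rangle$ playing the role of the Legendre-transform derivative $(g^*)'$ there, and your relation $2S=2+S$ matching the paper's $S=2S-((g^*)'(-1)-(g^*)'(0))$. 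Your second-identity proof via the Euclidean perimeter defect $-f(|v_1|+|v_2|-|w|)=-f^2$ of the circumscribed polygon is a clean reformulation specific to the circle (it uses $\gamma_v=|v|$, so that the Euclidean side lengths of $\Delta$ are $f|v_1|,f|v_2|,f|w|$); the paper's area-tiling version, which you also record, is the one that generalizes to arbitrary $g$. Both of your limiting steps (exhaustion/disjointness of the chains, convergence of circumscribed perimeters to $\pi/2$) are standard for the Stern--Brocot subdivision and match what the paper asserts.

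One small repair is needed in your convergence bound. The inequality $f(v_1,v_2)\le \frac{2}{|v_1|\,|v_2|\,(|v_1|+|v_2|)}$ is correct, but the comparison with $\sum_{v}|v|^{-3}$ does not literally work: grouping by $w=v_1+v_2$ one only gets $|v_1||v_2|(|v_1|+|v_2|)\gtrsim |v_1(w)|\,|w|^2$, and $|v_1(w)|$ can stay bounded (e.g.\ $w=(k,1)$ has parent $(1,0)$, so the product is of order $k^2$, not $k^3$). The sum still converges: using $\sqrt{a^2+b^2}\ge (a+b)/\sqrt2$ your bound is dominated by a constant times $\sum^{+}\frac{1}{(a+b)(c+d)(a+b+c+d)}$, which is (up to reindexing by the primitive vector $(a+b,c+d)$) the convergent Mordell--Tornheim sum $\sum_{(p,q)=1}\frac{1}{pq(p+q)}$ evaluated in Section~\ref{sec_two}; alternatively one can invoke Lemma~\ref{lemma_second} and Lemma~\ref{lemma_parab}. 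With that substitution the double count $2S=2+S$ is fully justified.
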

To derive these formulas, we may consider a unit circle in $\RR^2$, then iteratively draw certain tangent lines and sum up the areas of triangles formed by these tangent lines. 
Choosing other initial curves, in this article we get more formulas such as \eqref{eq_1} (the case of cycloid). We present the general construction for an arbitrary  concave function $g$ in Theorem~\ref{thm_short}. Although this result appeared (in different notation) in papers \cite{easter, pi_short}, its proof is recalled in Section~\ref{sec_one}, and yet another proof (via telescoping) is given in Section~\ref{sec_3}.

In this article we explore the applicability of our summation method and investigate the role of Legendre duality. We may summarize the conceptual relations in the following diagram:

\begin{figure}[h]
\centering
\begin{tikzpicture}[>=latex, node distance=4cm]

\node (area) [draw, rectangle, rounded corners, minimum width=3cm, align=center]
  {Our approach\\(area)\\ Section~\ref{sec_one}};
\node (hata) [draw, rectangle, rounded corners, minimum width=3cm, right=of area, align=center]
  {Hata's approach\\(integrals)\\Section~\ref{sec_mixed}};

\coordinate (mid) at ($(area)!0.5!(hata)$);

\node (telescoping) [draw, rectangle, rounded corners, minimum width=3cm, below=3cm of mid, align=center]
  {Telescoping\\ Section~\ref{sec_3}};

\draw[<->] (area) -- node[midway, above, sloped] {Legendre duality} (hata);
\draw[->] (area) -- (telescoping);
\draw[<-] (hata) -- (telescoping);

\end{tikzpicture}
\caption{Relation between our approach, Hata's approach, and telescoping.}
\end{figure}

In Section~\ref{sec_two} we derive  formulas similar to Theorem~\ref{thm_circle} for parabolas, and for various other functions (including the cycloid, \eqref{eq_1}) in Section~\ref{sec_others}. Section~\ref{continued} contains several proofs of error-sum-like identities for continued fractions (by carving, by induction, and by telescoping). Raising the terms in the above summation formula to the power $s$ we obtain a function $F_g(s)$. In Section~\ref{sec_3} we use the Legendre dual function to simplify the estimates and prove convergence of $F_g(s)$ for $s>2/3$ as well as to provide an alternative proof of Theorem~\ref{thm_short}. Section~\ref{sec_mixed} reveals the connection between our formulas and Hata's results, we also prove a  theorem about mixed area inspired by Hata's example for the case of Euler--Mascheroni constant $\gamma$. 

In Section~\ref{sec_disc} we discuss the relations of the presented material with the reduced matrices and Selberg trace formula, and mention several yet unexplored future directions.

\tableofcontents
 
 \section{Formulas for the area and $SL(2,\ZZ)$-length}
 \label{sec_one}
 
 In this section, we define the terms $f_g(a,b,c,d)$ and recall the geometric meaning of $F_g(1)$ and $F_g(2)$ for the function $F_g(s)=\sum\limits^{+} f_g(a,b,c,d)^s$, see Figure~\ref{fig_tangents}.
 
Consider a graph of a strictly concave continuous  function $g:[x_0,x_1]\to\RR$, i.e. $$g\left(\frac{x+y}{2}\right)< \frac{g(x)+g(y)}{2}\text{ for } x,y\in[x_0,x_1], x\ne y.$$ Suppose that $g$ is differentiable on $[x_0,x_1)$,  $g'(x_0)=0$ and the tangent line to the graph of $g$ at $x_1$ is vertical ($g'(x_1^-)=-\infty$), see Figure~\ref{fig_tangents} for an illustration.  
 
 \begin{definition}\label{def_gtriangle}A $g$-triangle is the curvilinear triangle with three vertices $(x_0,g(x_0)),(x_1,g(x_0)),(x_1,g(x_1))$ where $(x_0,g(x_0)),(x_1,g(x_1))$ are connected by the graph of $g$ and two other sides are horizontal and vertical straight intervals.
 \end{definition}

Given a $g$-triangle, each primitive vector $(a,b)\in\ZZ_{\geq 0}^2$, determines a tangent line $L_{a,b}$ for the graph of $g$, such that  $L_{a,b}$ is orthogonal to the vector $(a,b)$ and intersects the two straight sides of this $g$-triangle. Let the equation of $L_{a,b}$ be $$ax+by-\gamma_{a,b}=0.$$ At the point $(x_{ab},g(x_{ab}))$ where $L_{a,b}$ touches the graph of $g$ (this point is unique since $g$ is strictly concave), $L_{a,b}$ has the following form: 
\begin{equation}\label{eq_legendrian}y(x) = -\frac{a}{b}\,x+\frac{\gamma_{a,b}}{b},\end{equation} so $g'(x_{ab}) = \frac{-a}{b}$.

  \begin{figure}[h]
\begin{center}
\begin{tikzpicture}[scale=2]
\draw (1,0) arc (0:100:1);
\draw (1,0)--(1,1.3);
\draw (0,1)--(1.3,1);
\draw (0,1.4142)--(1.4142,0); 
\fill[gray] (0.4142,1)--(1,1)--(1,0.4142)--cycle;
\path[pattern=north east lines]  (0,1)--(1,1)--(1,0) arc (0:100:1)--cycle;

\draw (1,0) node{$\bullet$};
\draw (0,1) node {$\bullet$};
\draw (1,1) node {$\bullet$};
\draw (1/1.4142,1/1.4142) node {$\bullet$};

\draw (1,0) node[left] {$C$};
\draw (0,1) node[above] {$A$};
\draw (1,1) node[above right] {$B$};

\draw (-0.3,-0.3)--(1.3,-0.3);
\draw (0,-0.3) node{$\bullet$};
\draw (1,-0.3) node{$\bullet$};

\draw (1,-0.5) node {$x_1$};
\draw (0,-0.5) node {$x_0$};


\end{tikzpicture}
\end{center}
\caption{The graph of $g$ is the curve from $A$ to $C$. $ABC$ is a $g$-triangle. The gray triangle is the triangle $\Delta_{a,b,c,d}$ for $(a,b)=(1,0),(c,d)=(0,1), (a+c,b+d)=(1,1)$. The area of $\Delta_{a,b,c,d}$ is $\frac{1}{2}f_g^2(a,b,c,d)$. The area of the $g$-triangle, i.e., the region which is filled with parallel lines, is $\frac{1}{2}F_g(2)$, while $F_g(1)$ is $|AB|+|BC|$.}
\label{fig_tangents}
\end{figure}

Given $g$, define $f_g(a,b,c,d)$  in the following way: for two primitive vectors $(a,b),(c,d)\in\ZZ_{\geq 0}^2$ with $ad-bc=1$ draw the tangent lines $L_{a,b},L_{c,d},L_{a+c,b+d}$, given by equations

\begin{align*}
&ax+by-\gamma_{a,b}=0 \\
&cx+dy-\gamma_{c,d}=0 \\
&(a+c)x+(b+d)y-\gamma_{a+c,b+d}=0.\\
\end{align*}
The graph of $g$ lies in the intersection of the half-planes where these functions are non-positive.

Define $$f_g(a,b,c,d)=\sqrt{2S_{a,b,c,d}},$$ where $S_{a,b,c,d}$ is the area of the triangle bounded by $L_{a,b},L_{c,d},L_{a+c,b+d}$. Denote this triangle by $\Delta_{a,b,c,d}$:

\begin{align*}
&ax+by-\gamma_{a,b}\leq 0 \\
&cx+dy-\gamma_{c,d}\leq 0 \\
&(a+c)x+(b+d)y-\gamma_{a+c,b+d}\geq 0.\\
\end{align*}

The vertices of this triangle are

\begin{align*}
(x_{12},y_{12})=&(d\gamma_{a,b}-b\gamma_{c,d}, a\gamma_{c,d}-c\gamma_{a,b})\\
(x_{13},y_{13})=&((b+d)\gamma_{a,b}-b\gamma_{a+c,b+d}, a\gamma_{a+c,b+d}-(a+c)\gamma_{a,b})  \\
(x_{23},y_{23})=&(d\gamma_{a+c,b+d}-(b+d)\gamma_{c,d}, (a+c)\gamma_{c,d}-c\gamma_{a+c,b+d}).\\
\end{align*}

So $2S_{a,b,c,d}$ is $$|((x_{13}-x_{12})(y_{23}-y_{12})-(x_{23}-x_{12})(y_{13}-y_{12}))|=|(bc-ad)|(\gamma_{a,b}+\gamma_{c,d}-\gamma_{a+c,b+d})^2.$$
Since $ad-bc=1$ we have
\begin{equation}
\label{formula}
f_g(a,b,c,d)=\sqrt{2S_{a,b,c,d}}=\gamma_{a,b}+\gamma_{c,d}-\gamma_{a+c,b+d}.
\end{equation}

The terms $f(a,b,c,d)$ in \eqref{main} in Theorem~\ref{thm_circle} correspond to the choice $g(x)=\sqrt{1-x^2}$, i.e., when the graph of $g$ is the upper arc of the unit circle centered at the origin. Indeed, for this circle the tangent line $L_{a,b}$ is given by $$ax+by-\sqrt{a^2+b^2}=0$$ so $\gamma_{a,b} = \sqrt{a^2+b^2}$ which explains the definition of $f(a,b,c,d)$ in \eqref{main} via \eqref{formula}.

\begin{definition}[Zeta function for a $g$-triangle]\label{def_zeta}
Define $$F_g(s) = \sum\limits^+(f_g(a,b,c,d))^s.$$
\end{definition}
The name `zeta-function' is suggested by Mikhail Shkolnikov. As shown in  \cite{easter}, when $g=\sqrt{1-x^2}$ (the unit circle arc), we have $F_{\sqrt{1-x^2}}(1) = 2$ and $F_{\sqrt{1-x^2}}(2) = 2-\pi/2$. Moreover, the series $F_{\sqrt{1-x^2}}(s)$ converges if $s>2/3$ and diverges at $s=2/3$. For an arbitrary strictly concave $g$ the function $F_g(s)$ has the same behavior at $s=2/3$ as we prove in Theorem~\ref{thm_convergence}.

\begin{theorem}[\cite{pi_short}]
\label{thm_short}
Assume $g$ is strictly concave, continuous on $[x_0,x_1]$, $C^1$ on $[x_0,x_1)$ and $g'(x_1^-)=-\infty$, then  
\begin{equation}
\label{eq_thm1}
F_g(1)=\sum\limits^{+} |\gamma_{a,b}+\gamma_{c,d}-\gamma_{a+c,b+d}|\end{equation}
 equals the sum of the lengths of the two straight sides of the $g$-triangle (i.e., $|AB|+|BC|$ in Figure~\ref{fig_tangents}), while \begin{equation}
\label{eq_thm2}
F_g(2)=\sum\limits^{+} (\gamma_{a,b}+\gamma_{c,d}-\gamma_{a+c,b+d})^2\end{equation} equals twice the area of the $g$-triangle.
\end{theorem}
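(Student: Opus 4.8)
The plan is to exploit the recursive (Stern--Brocot) structure of $SL_{+}(2,\ZZ)$ together with the $SL(2,\ZZ)$-length advertised in the section title. Every element of $SL_{+}(2,\ZZ)$ is obtained uniquely from the seed $((1,0),(0,1))$ by repeatedly replacing a pair $((a,b),(c,d))$ by its two children $((a,b),(a+c,b+d))$ and $((a+c,b+d),(c,d))$, so $\sum\limits^{+}$ is a sum over the nodes of an infinite binary tree. To each node I attach the curvilinear \emph{corner region} $R_{a,b,c,d}$ bounded by the tangent segments of $L_{a,b}$ and $L_{c,d}$ from their tangency points to their intersection point $(x_{12},y_{12})$, together with the arc of the graph of $g$ between the two tangency points. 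The seed region $R_{1,0,0,1}$ is exactly the $g$-triangle $ABC$ of Definition~\ref{def_gtriangle}, and drawing the mediant tangent $L_{a+c,b+d}$ splits $R_{a,b,c,d}$ into the triangle $\Delta_{a,b,c,d}$ and the two children corner regions, with disjoint interiors.

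Two elementary computations drive everything. First, from the vertices $(x_{12},y_{12}),(x_{13},y_{13}),(x_{23},y_{23})$ already recorded above, one checks that the three sides of $\Delta_{a,b,c,d}$ lying on $L_{a,b},L_{c,d},L_{a+c,b+d}$ have Euclidean lengths $f_g\sqrt{a^2+b^2}$, $f_g\sqrt{c^2+d^2}$, $f_g\sqrt{(a+c)^2+(b+d)^2}$. Measured in the $SL(2,\ZZ)$-length, where a segment in the primitive direction $(-b,a)$ has length (Euclidean length)$/\sqrt{a^2+b^2}$, each of these sides has lattice length exactly $f_g(a,b,c,d)$, so $\Delta_{a,b,c,d}$ is \emph{equilateral of side $f_g$}. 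Second, by \eqref{formula} its Euclidean area is $\tfrac12 f_g^2$. For \eqref{eq_thm2} I then use the tiling: since each $R_{a,b,c,d}$ is the disjoint union of $\Delta_{a,b,c,d}$ and its two children regions, induction shows that all triangles $\Delta$ over the tree have pairwise disjoint interiors and sit inside $R_{1,0,0,1}$. A point of the $g$-triangle that is never carved lies in an infinite nested chain of corner regions; as the mediant directions refine, these regions contract to a single point of the graph, so the uncarved set is exactly the graph, of zero area. Hence the area of the $g$-triangle equals $\sum\limits^{+}\tfrac12 f_g^2=\tfrac12 F_g(2)$, which is \eqref{eq_thm2}.

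For \eqref{eq_thm1} I run the same carving but track the $SL(2,\ZZ)$-length of the polygonal frontier, i.e.\ the union of the straight tangent sides of the current leaf regions. Carving a node removes the two sides $(x_{12},y_{12})(x_{13},y_{13})$ and $(x_{12},y_{12})(x_{23},y_{23})$, each of lattice length $f_g$, and inserts the opposite side $(x_{13},y_{13})(x_{23},y_{23})$, again of lattice length $f_g$; thus the frontier lattice length drops by exactly $f_g(a,b,c,d)$ at each carve. The initial frontier consists of the axis-parallel sides $BA$ and $CB$, whose lattice lengths coincide with the Euclidean lengths $|AB|$ and $|BC|$ because $(1,0),(0,1)$ are unit primitive vectors. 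Consequently every partial sum of $\sum\limits^{+}f_g$ equals $|AB|+|BC|$ minus the nonnegative current frontier length; in particular the positive series converges and $F_g(1)\le |AB|+|BC|$. Since the corner regions contract to points of the graph, the straight sides of the depth-$n$ leaves have total lattice length tending to $0$, so passing to the limit gives $F_g(1)=|AB|+|BC|$, as claimed.

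The routine parts are the two length/area computations (immediate from the listed vertices) and the bijective Stern--Brocot enumeration of $SL_{+}(2,\ZZ)$; note also that $f_g\ge0$ by convexity, which justifies dropping the absolute value in \eqref{eq_thm1}. The genuine work is the \textbf{shrinking lemma}: that the nested corner regions contract to points of the graph, equivalently that the tangent-line polygons converge to the curve, so the uncarved remainder has zero area and the frontier length tends to $0$. This is exactly where the hypotheses enter: strict concavity gives a unique tangency point in each direction and forces the corner heights (controlled by $f_g$) to vanish, while $g'(x_0)=0$ and $g'(x_1^-)=-\infty$ guarantee that the extreme directions $(0,1)$ and $(1,0)$ are realized, pinning the frontier to the two straight sides of the $g$-triangle. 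Some care is also needed near $x_1$, where the individual $\gamma_{a,b}$ and tangency points escape to infinity, so all estimates must be phrased through the differences $f_g$ rather than the $\gamma$'s themselves.
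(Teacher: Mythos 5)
Your proposal is correct and follows essentially the same route as the paper: the triangles $\Delta_{a,b,c,d}$ tile the $g$-triangle (giving \eqref{eq_thm2} from their areas $\tfrac12 f_g^2$), while each carve decreases the $SL(2,\ZZ)$-length of the boundary polyline by exactly $f_g(a,b,c,d)$ and that length tends to zero (giving \eqref{eq_thm1}). The step you single out as the genuine remaining work --- the shrinking of the nested corner regions and the vanishing of the residual lattice length --- is precisely the limit the paper itself argues, somewhat informally, at the end of its proof.
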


\begin{proof}
The proof of this theorem is the following geometric cut-and-paste computation. The triangles $\Delta_{a,b,c,d}$, corresponding to terms in $F_g(2)$ tile the $g$-triangle, i.e. they fill  the region between the graph of $g$ and the tangents $L_{1,0}, L_{0,1}$, see Figure~\ref{fig_tangents}.  Since each term in $F_g(2)$ equals twice the area of the corresponding triangle $\Delta_{a,b,c,d}$,  we conclude that $F_g(2)$ equals twice the area of the $g$-triangle.

The geometric interpretation of  $F_g(1)$ is as follows. Consider $SL(2,\ZZ)$-invariant length of the straight intervals of rational slope; sometimes it is called the {\it lattice length}. 
\begin{definition}\label{def_sl2zlength} The $SL(2,\ZZ)$-length (or the lattice length) of an interval $I$ of rational slope is equal to the usual Euclidean length of $I$ divided by the Euclidean length of the primitive vector in the direction of $I$. 
\end{definition}
This notion of length is invariant with respect to the action of $SL(2,\ZZ)$ and parallel translations. Note that each of the triangles $\Delta_{a,b,c,d}$ may be brought to the triangle with vertices $(0,0),(\mu,0),(0,\mu),\mu\geq 0$ by the $SL(2,\ZZ)$ action followed by a translation, so $\Delta_{a,b,c,d}$ is $SL(2,\ZZ)$-equilateral triangle with the sides of $SL(2,\ZZ)$-length $\mu$ equal to $f_g(a,b,c,d)$. So, we start with $SL(2,\ZZ)$-length of the polyline $AB+BC$ which is equal to $|AB|+|BC|$ since the primitive vectors in the horizontal and vertical directions have length one. Then, each time a triangle  $\Delta_{a,b,c,d}$ is carved, the polyline gains one more side and its $SL(2,\ZZ)$-length decreases by $f_g(a,b,c,d)$. Then, if a concave polyline tends to a strictly concave curve, then the sequence of  $SL(2,\ZZ)$-lengths of polylines tends to zero, because the Euclidean lengths of polylines is bounded; and a part of it, which tends to zero, is divided by short primitive vectors, and other part is divided by long primitive vectors (see Definition~\ref{def_sl2zlength}); hence it also tends to zero.  Therefore, $F_g(1)$ is equal to the $SL(2,\ZZ)$-perimeter $|AB|+|BC|$ that we started with.
\end{proof}

In a sense, this method of computing the area is dual to Archimedes' approach for the area under a parabola: Archimedes filled the area  beneath the parabola using inscribed triangles with vertices on the parabola,  while here we carve triangles out of the complement to the curve, and the lines containing the sides of triangles are tangent to the graph of $g$. 

\begin{remark}
Note that our construction is $SL(2,\ZZ)$-invariant. So, instead of a $g$-triangle how it was defined above, for each $ \begin{pmatrix} a \ b\\ c \ d \end{pmatrix} \in SL_{+}(2,\ZZ)$, we may begin with a similar curvilinear triangle whose two straight sides belong to lines $ax+by+\gamma_{a,b}=0, cx+dy+\gamma_{c,d}=0$, then we cut a triangle by the tangent line $(a+c)x+(b+d)y+\gamma_{a+c,b+d}=0$, etc. \end{remark}

Actually, for our construction we only need any piece of strictly concave curve $\Gamma:[0,1]\to\RR^2$ such that the triangle formed by tangents to $\Gamma$ at $\Gamma(0), \Gamma(1)$  and the interval $\Gamma(0)\Gamma(1)$  contains $\Gamma$.

\begin{definition}
For a strictly concave curve $\Gamma:[0,1]\to\RR^2$, the triangle with vertices $\Gamma(0),\Gamma(1)$, and the intersection $C$ of the tangents to $\Gamma$ at $\Gamma(0)=A$ and $\Gamma(1)=B$ is called the {\it support triangle} of $\Gamma$, if it contains $\Gamma$. The curvilinear triangle with sides $AB, BC$ and $\Gamma$ is called a $\Gamma$-triangle.
\end{definition}

 Then we choose equations $\alpha x+\beta y-\gamma_{1,0}=0, \alpha' x+\beta' y-\gamma_{0,1}=0$ of tangents at $A, B$ such that $\det \begin{pmatrix} \alpha \ \beta\\ \alpha' \ \beta' \end{pmatrix}=1$, and $\Gamma$ belongs to the intersection of the halfplanes where these equations are non-positive.

\begin{definition}
Given the above choice, define $F_\Gamma(s) = \sum\limits^+ f_\Gamma(a,b,c,d)^s$ where  $f_\Gamma(a,b,c,d)$ is $\sqrt{2S_{a,b,c,d}}$ and $S_{a,b,c,d}$ is the area of the triangle formed by three lines tangent to $\Gamma$, given by
\begin{align*}
&(a\alpha+b\alpha')x+(a\beta+b\beta')y-\gamma_{a,b}\leq 0 \\
&(c\alpha+d\alpha')x+(c\beta+ d\beta')y-\gamma_{c,d}\leq 0 \\
&((a+c)\alpha + (b+d)\alpha')x+((a+c)\beta+(b+d)\beta')-\gamma_{a+c,b+d}\geq 0.\\
\end{align*}
\end{definition}

This definition is equivalent to Definition~\ref{def_zeta} for $\alpha=\beta'=1, \alpha'=\beta=0$.  Define the $SL(2,\ZZ)$-length of $AB$ as its Euclidean length $|AB|$ divided by $\sqrt{\alpha^2+\beta^2}$ and the $SL(2,\ZZ)$-length of $BC$ as its Euclidean length $|BC|$ divided by $\sqrt{\alpha'^2+\beta'^2}$. Then we have the following theorem, whose proof is identical to that of Theorem~\ref{thm_short}.

\begin{theorem}
\label{thm_short2}
Consider a $\Gamma$-triangle, then 
\begin{equation}
\label{eq_thm11}
F_\Gamma(1)=\sum\limits^{+} |\gamma_{a,b}+\gamma_{c,d}-\gamma_{a+c,b+d}|\end{equation}
 equals the sum of the $SL(2,\ZZ)$ lengths of the two straight sides of the $\Gamma$-triangle, while \begin{equation}
\label{eq_thm22}
F_\Gamma(2)=\sum\limits^{+} (\gamma_{a,b}+\gamma_{c,d}-\gamma_{a+c,b+d})^2\end{equation} equals twice the area of the $\Gamma$-triangle.
\end{theorem}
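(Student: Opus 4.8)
The plan is to reduce Theorem~\ref{thm_short2} to Theorem~\ref{thm_short} by exploiting the linear covariance of the whole construction. Put $M=\begin{pmatrix}\alpha&\beta\\\alpha'&\beta'\end{pmatrix}$, so that $\det M=1$ by the normalization imposed before the theorem, and let $T\colon\RR^2\to\RR^2$ be the linear map $x\mapsto Mx$. First I would record how the data transform under $T$: a line $n\cdot x=\gamma$ is sent to $nM^{-1}\cdot y=\gamma$, so a normal covector (written as a row) is multiplied on the right by $M^{-1}$ while the constant $\gamma$ is unchanged. Each of the three tangent lines entering $f_\Gamma(a,b,c,d)$ has normal of the form $(a,b)M$, so $T$ sends it to the line with normal $(a,b)MM^{-1}=(a,b)$ and the \emph{same} constant $\gamma_{a,b}$. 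Thus $T$ carries the whole family of tangents for $\Gamma$ onto the standard family of Definition~\ref{def_zeta} for the image curve $M\Gamma$, with identical constants; in particular $f_\Gamma(a,b,c,d)=f_{M\Gamma}(a,b,c,d)$ for every admissible quadruple, whence $F_\Gamma(s)=F_{M\Gamma}(s)$ term by term.

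Next I would verify that $M\Gamma$ is an honest $g$-triangle in the sense of Definition~\ref{def_gtriangle}. Linear maps preserve strict concavity, and by the previous step the two endpoint tangents (normals $(\alpha,\beta)$ and $(\alpha',\beta')$) become a vertical and a horizontal line (normals $(1,0)$ and $(0,1)$); a strictly concave arc with one horizontal and one vertical endpoint tangent is the graph of a strictly monotone strictly concave function, so $M\Gamma$ meets the hypotheses of Theorem~\ref{thm_short}. Since $\det M=1$, the map $T$ preserves area, so the $\Gamma$-triangle and the $M\Gamma$-triangle have the same area and each $\Delta_{a,b,c,d}$ has the same area as its image. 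Applying the area statement of Theorem~\ref{thm_short} to $M\Gamma$ then gives $F_\Gamma(2)=F_{M\Gamma}(2)=2\,\mathrm{Area}(M\Gamma\text{-triangle})=2\,\mathrm{Area}(\Gamma\text{-triangle})$, which is \eqref{eq_thm22}.

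For \eqref{eq_thm11} I would check that the generalized $SL(2,\ZZ)$-length introduced before the theorem is exactly the pullback under $T$ of the ordinary lattice length of Definition~\ref{def_sl2zlength}. The straight side $AB$ lies on a line with normal $(\alpha,\beta)$, hence has unit direction $v=(-\beta,\alpha)/\sqrt{\alpha^2+\beta^2}$; a one-line computation using $\det M=1$ gives $Mv=(0,1)^{T}/\sqrt{\alpha^2+\beta^2}$. Since the image of $AB$ then has direction the primitive vector $(0,1)$, its ordinary lattice length equals its Euclidean length, namely $|AB|/\sqrt{\alpha^2+\beta^2}$, which is precisely the declared generalized length of $AB$; the analogous computation for the side $BC$ (normal $(\alpha',\beta')$) gives $|BC|/\sqrt{\alpha'^2+\beta'^2}$. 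Hence the generalized lattice perimeter of the $\Gamma$-triangle equals the ordinary lattice perimeter of the $M\Gamma$-triangle, and the length statement of Theorem~\ref{thm_short} applied to $M\Gamma$ yields \eqref{eq_thm11}.

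The step I expect to need the most care is confirming that $M\Gamma$ really satisfies the full regularity and orientation hypotheses of Theorem~\ref{thm_short} ($C^1$ on the half-open interval, the prescribed endpoint slopes, and the correct sign so that $\det M=1$ rather than a reflection is what is required), not merely that it is a strictly concave curve in standard position. If one wishes to bypass this reduction, the alternative — and this is the sense in which the proof is \emph{identical} to that of Theorem~\ref{thm_short} — is to rerun the cut-and-paste computation directly for the $\Gamma$-triangle: the triangles $\Delta_{a,b,c,d}$ still tile the $\Gamma$-triangle and carry area $\tfrac12 f_\Gamma(a,b,c,d)^2$, giving \eqref{eq_thm22}, while for \eqref{eq_thm11} each carving step makes the approximating concave polyline gain one side and decreases its generalized lattice perimeter by exactly $f_\Gamma(a,b,c,d)$. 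The only point requiring a word there is that these generalized lattice perimeters tend to $0$; this holds because along each fixed direction the generalized length is a positive constant multiple of the ordinary lattice length, so the limiting argument of Theorem~\ref{thm_short} carries over verbatim.
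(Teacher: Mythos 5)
Your proposal is correct, and its primary route differs in packaging from the paper's. The paper disposes of Theorem~\ref{thm_short2} in one line --- ``whose proof is identical to that of Theorem~\ref{thm_short}'' --- i.e.\ it asks the reader to rerun the cut-and-paste carving argument verbatim in the generalized setting. You instead make the reduction explicit: the unimodular map $T(x)=Mx$ with $M=\bigl(\begin{smallmatrix}\alpha&\beta\\ \alpha'&\beta'\end{smallmatrix}\bigr)$ sends each tangent with normal $(a,b)M$ to the standard tangent with normal $(a,b)$ and the \emph{same} constant $\gamma_{a,b}$, preserves all areas since $\det M=1$, and pulls the ordinary lattice length of Definition~\ref{def_sl2zlength} back to exactly the generalized length declared before the theorem (your computation $Mv=(0,1)^{T}/\sqrt{\alpha^2+\beta^2}$ is right, using $\alpha\beta'-\beta\alpha'=1$). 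This is arguably the cleaner argument: it turns the paper's assertion that the proof ``is identical'' into a precise statement ($F_\Gamma(s)=F_{M\Gamma}(s)$ termwise, with matching geometric quantities on the right-hand sides), at the cost of the orientation/regularity check you correctly flag --- that $M\Gamma$ lands in the exact configuration of Definition~\ref{def_gtriangle} (here the sign $\det M=+1$ rather than $-1$ matters, and the support-triangle containment hypothesis is what rules out the image failing to be a graph). Your closing paragraph, which reruns the carving directly and notes that the generalized perimeter still tends to zero because it is direction-by-direction a bounded multiple of the ordinary one, is precisely the paper's intended proof, so nothing is missing.
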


\section{Mordell--Tornheim series}
\label{sec_two}
In this section we study in detail the formulas for parabolas that can be obtained using our method. The computations for other classical curves are presented in Section~\ref{sec_others}.

Consider the parabola $$y=1-(x-y)^2,$$ and take the portion of its graph between the horizontal and vertical tangent lines. This portion corresponds to the graph of the function $$y=g(x) = \frac{2x-1+\sqrt{5-4x}}{2}.$$

 By direct computation, the tangent line $ax+by-\gamma_{a,b}=0$ for this $g$ has  $\gamma_{a,b} = \frac{a^2}{4(a+b)}+a+b$ and
$$\gamma_{a,b}+\gamma_{c,d}-\gamma_{a+c,b+d} =  \frac{1}{4(a+c)(b+d)(a+b+c+d)}.$$

Hence, by Theorem~\ref{thm_short}
\begin{align*}
\sum^{+} (\gamma_{a,b}+\gamma_{c,d}-\gamma_{a+c,b+d})^2=\sum^{+} \frac{1}{16(a+c)^2(b+d)^2(a+b+c+d)^2} = \\ = 2( 5/4\cdot 1/4 - \int_{3/4}^1 (y+\sqrt{1-y})dy)=1/48.\end{align*}
$$\sum^{+} \frac{1}{4(a+c)(b+d)(a+b+c+d)} = 5/4-1+ 1-3/4= 1/2.$$

Note that the vector $(a+c,b+d) = (m,n)$ can be any primitive vector in the first quadrant except $(1,0)$ and $(0,1)$, so this expression can also be written as $$\sum^{+} \frac{1}{(a+c)^2(b+d)^2(a+b+c+d)^2}   = \sum\limits_{\substack{(m,n)=1,\\ m,n>0}} \frac{1}{m^2n^2(m+n)^2} = 1/3,$$
and $$\sum\limits_{\substack{(m,n)=1,\\ m,n>0}} \frac{1}{mn(m+n)}=2.$$

The series $\sum\limits_{m,n>0} \frac{1}{m^{s_1}n^{s_2}(m+n)^{s_3}}=T(s_1,s_2,s_3)$ is called a Mordell-Tornheim series, for Tornheim considered it in  \cite{tornheim1950harmonic} to find relations between multiple zeta values, and Mordell evaluated  $T(s_1,s_2,s_3)$ for $s_1=s_2=s_3$ being even integer in \cite{mordell1958evaluation}. 

The series $\zeta_{\mathfrak{su}(3)}(s)=2^s\sum\limits_{m,n>0} \frac{1}{m^sn^s(m+n)^s}$ is also called the Witten series, as Witten defined a zeta series for a compact semisimple Lie algebra $G$ as the sum of $s$-th powers of the dimensions of the irreducible representations of $G$, in this example $\frac{1}{2}mn(m+n)$ being the dimension of an irreducible representation of $\mathfrak{su}(3)$ with the highest weights $m-1$ and $n-1$. 

Mordell--Tornheim series are also related to toroidal $b$-divisors \cite{botero2022toroidal} where the value of $\sum\frac{1}{m^2n^2(m+n)^2}$ represents the self-intersection of a canonical divisor of a toroidal (i.e. blown up infinitely many times $\mathbb P^2$) two-fold.

The Mordell-Tornheim series appears in the study of linear relations between multiple zeta values,  \cite{zagier1994values}, \cite{gangl2006double}. It converges whenever $\mathrm{Re}\ s> 2/3$ \cite{matsumoto2003mordell},\cite{matsumoto2002analytic}. The Mordell-Tornheim series can be presented (for different values of $s$) as a certain integral \cite{komori2008integral},\cite{onodera2012functional}. Analyticity of the generalizations of Mordell--Tornheim series for $s\in\CC$ outside a certain number of explicitly described hyperplanes is proven in \cite{miyagawa2016analytic}.  The residues at $s=2/3, s=1/2-k, k\in\ZZ_{\geq 0}$ are explicitly evaluated in \cite{romik2015number}. Therefore, in this case $F_g(s)$ is an analytic function on $\CC$ with known set of poles. It would be interesting to connect the estimates of lattice point counting under a parabola to zeros of  $F_g(s)$.

For completeness, we show 
\begin{lemma}\label{lemma_parab}
The function
$$\omega(s)=\sum\limits_{m,n\geq 1} \frac{1}{m^sn^s(m+n)^s}$$
converges for $s>2/3$ and diverges at $s=2/3$.
\end{lemma}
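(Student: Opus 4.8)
The plan is to analyze the convergence of $\omega(s)=\sum_{m,n\geq 1} (m^s n^s (m+n)^s)^{-1}$ by comparing the double sum to a double integral, which is the standard tool for multidimensional Dirichlet-type series. First I would observe that the summand $h(m,n) = m^{-s} n^{-s} (m+n)^{-s}$ is positive and, for each fixed $s>0$, monotonically decreasing in each variable separately on $[1,\infty)^2$. This monotonicity lets me sandwich the sum between two integrals: by comparing $h(m,n)$ with $\int_{m-1}^{m}\int_{n-1}^{n} h(x,y)\,dx\,dy$ and $\int_{m}^{m+1}\int_{n}^{n+1} h(x,y)\,dx\,dy$, the convergence of $\omega(s)$ is equivalent to the convergence of the improper integral
\begin{equation*}
I(s)=\int_{1}^{\infty}\!\!\int_{1}^{\infty} \frac{dx\,dy}{x^s y^s (x+y)^s}.
\end{equation*}

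Next I would evaluate the tail behavior of $I(s)$ by passing to polar-like or scaling coordinates. Writing $x = r\cos^2\theta$-type substitutions is cumbersome; instead I would substitute $x = r u$, $y = r(1-u)$ with $r=x+y>0$ and $u\in(0,1)$, whose Jacobian is $r$, so the integrand becomes $r^{-3s}\, u^{-s}(1-u)^{-s}\cdot r\,du\,dr = r^{1-3s}\,u^{-s}(1-u)^{-s}\,du\,dr$. The angular integral $\int_0^1 u^{-s}(1-u)^{-s}\,du = B(1-s,1-s)$ converges precisely when $s<1$, which is satisfied in the regime near $s=2/3$ that we care about; the radial factor $\int^\infty r^{1-3s}\,dr$ converges if and only if $1-3s<-1$, i.e. $s>2/3$, and diverges at $s=2/3$ where the exponent is exactly $-1$, yielding a logarithmic divergence. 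This isolates $s=2/3$ as the threshold exponent.

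The main obstacle is bookkeeping at the boundary of the region of integration: after the $(r,u)$ substitution the domain $[1,\infty)^2$ is not a clean product in the new coordinates, and the Beta-integral $\int_0^1 u^{-s}(1-u)^{-s}\,du$ has endpoint singularities at $u=0,1$ that correspond exactly to the corners where one of $x,y$ stays bounded while the other grows. I would handle this by splitting the domain into the bulk region $\{x,y\geq 1,\ x+y\geq R\}$ for large $R$—where the scaling argument above applies cleanly and governs convergence or divergence—and a complementary region of bounded $r$ where the integrand is bounded and the integral is plainly finite. In the bulk region the constraints $x\geq 1$, $y\geq 1$ only trim the $u$-range away from its singular endpoints (for each fixed large $r$ we have $u\in[1/r,\,1-1/r]$), so the truncated angular integral stays bounded above by the convergent $B(1-s,1-s)$ and bounded below by a positive constant; hence the radial integral alone determines the dichotomy.

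Assembling these pieces, for $s>2/3$ the integral $I(s)$ is finite, so $\omega(s)$ converges by the integral comparison; and at $s=2/3$ the radial integral $\int^\infty r^{-1}\,dr$ diverges while the angular factor is bounded below by a positive constant on the truncated range, forcing $I(2/3)=\infty$ and hence $\omega(2/3)=\infty$. This completes the proof. I note that the same threshold $s=2/3$ is exactly the one appearing in Theorem~\ref{thm_convergence} for general strictly concave $g$, which is consistent since the parabola is one instance of that construction.
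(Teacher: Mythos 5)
Your proof is correct and is essentially the paper's argument in a slightly different guise: the paper fixes $m$, rescales the inner sum over $n$ via $x=mt$ to extract the factor $m^{1-2s}$, and then sums $m^{1-3s}$, whereas you perform the equivalent two-dimensional substitution $(x,y)=(ru,\,r(1-u))$ all at once to isolate the radial integral $\int r^{1-3s}\,dr$; both rest on the degree-$(-3s)$ homogeneity of the summand plus sum--integral comparison. Your write-up is if anything more carefully justified than the paper's (whose $\approx$ signs are left informal); the only point worth adding is that since the angular factor $B(1-s,1-s)$ is finite only for $s<1$, convergence for $s\geq 1$ should be dispatched separately, e.g.\ by noting that $\omega(s)$ is nonincreasing in $s$ because $mn(m+n)\geq 2$.
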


\begin{proof}
Consider the half of the sum where $m\geq n$. Fix $m$. Then, for $2/3\leq s <1$ we have $$\sum\limits_{n\leq m} \frac{1}{n^s(m+n)^s}\approx \int_1^m \frac{1}{x^s(m+x)^s}\approx m^{1-2s}\int_0^1\frac{1}{t^s(1+t)^s}.$$

Therefore, $$\sum\limits_{m\geq n\geq 1} \frac{1}{m^sn^s(m+n)^s} \approx \sum_{m=1}^\infty m^{1-3s},$$
which converges for $s>2/3$ and diverges at $s=2/3$.
\end{proof}

\section{Hyperbola, cycloid, tractrix, and astroid}
In this section, we describe the summation formulas obtained from different classical choices of the function $g$, such as  hyperbola, cycloid, tractrix, and astroid.
 
\label{sec_others} Consider the portion of the curve $y^2-(x-2y)^2=1$ between the points where it has a horizontal tangent line and a vertical tangent line, let this serve as a function $g$.  In this case,  $\gamma_{a,b} = -\sqrt{(2a+b)^2-a^2}$ and  the associated series is
 
\begin{align*}\sum^{+} (\sqrt{(2a+b)^2-a^2}+\sqrt{(2c+d)^2-c^2}-\sqrt{(2a+2c+b+d)^2-(a+c)^2})^2=\\
=-2\int_{-2/\sqrt 3}^{-1}(2y+\sqrt{y^2-1}+\sqrt{3}) =\frac{1}{2}\ln 3+2\sqrt{3} -4.
\end{align*}
This can also be rewritten as
$$\sum \left(\sqrt{a^2-b^2}+\sqrt{c^2-d^2}-\sqrt{(a+c)^2-(b+d)^2}\right)$$
over an appropriate subset of $SL(2,\ZZ)$ ($a\geq2b, c\geq 2d$), which can be regarded as the hyperbolic analog of \eqref{main}.

\vskip 1.5ex 
\noindent {\bf Cycloid.}  Consider the segment of cycloid curve defined parametrically by $(x(t),y(t))=(t-\sin t, 1-\cos t)$ between points where the tangent is horizontal and vertical. Then $$\gamma_{a,b} = a\cdot \mathrm{arccos}\frac{a^2-b^2}{a^2+b^2}+2b$$ and the associated summation develops as

$$\sum^{+} \left(a\cdot\mathrm{arccos}\frac{a^2-b^2}{a^2+b^2}+ c\cdot \mathrm{arccos}\frac{c^2-d^2}{c^2+d^2}-(a+c)\cdot\mathrm{arccos}\frac{(a+c)^2-(b+d)^2}{(a+c)^2+(b+d)^2}\right)^2=\pi.$$

Here, we explicitly computed the area of the corresponding $g$-triangle. Equivalently, 
$$4\sum^{+} \left(a \cdot\mathrm{arctan}(\frac{a}{b})+ c\cdot \mathrm{arctan}(\frac{c}{d})-(a+c)\cdot\mathrm{arctan}(\frac{a+c}{b+d})\right)^2=\pi.$$

Note that $\mathrm{arctan}(\frac{a}{b})$ measures the angle between the vector $(a,b)$ and $y$-axis; hence this identity may be viewed as a ``weighted angle version'' of \eqref{main}.

\vskip 1.5ex 
\noindent {\bf Tractrix.} Consider the tractrix curve, $$y(x)= \sqrt{1-x^2}-\ln\frac{1+\sqrt{1-x^2}}{x}$$ defined on the interval  $[-1,0]$. Then $\gamma_{a,b} =b\ln(\sqrt{a^2+b^2})-b\ln b$ and the corresponding sum becomes

$$\sum^{+} \left(\ln(\frac{\sqrt{a^2+b^2}^b\sqrt{c^2+d^2}^d}{\sqrt{(a+c)^2+(b+d)^2}^{b+d}}) +\ln \frac{(b+d)^{b+d}}{b^bd^d}\right)^2=\pi.$$

\vskip 1.5ex 
\noindent {\bf Astroid.} For the curve $x^{2/3}+y^{2/3}=1$ we obtain $\gamma_{ab} =\frac{ab}{\sqrt{a^2+b^2}} $ and so $$\sum^{+} \left(\frac{ab}{\sqrt{a^2+b^2}}+\frac{cd}{\sqrt{c^2+d^2}}-\frac{(a+c)(b+d)}{\sqrt{(a+c)^2+(b+d)^2}}\right)^2 = 3\pi/16.$$

$$\sum^{+} \left(\frac{ab}{\sqrt{a^2+b^2}}+\frac{cd}{\sqrt{c^2+d^2}}-\frac{(a+c)(b+d)}{\sqrt{(a+c)^2+(b+d)^2}}\right) = -2.$$

\section{Identities for error sums of continued fractions} 
\label{continued}

The topic of continued fractions is rich and classical, but it seems that we found certain new and simple identities that might be included as exercises in textbooks. 

Let $\alpha$ be an irrational real number. Consider a simple continued fraction

$$a_{0}\in \mathbb{Z} , a_{n}\in \mathbb{Z}_{\geq 0}, \quad \alpha=a_0+\cfrac{1}{a_1+\cfrac{1}{a_2+\cfrac{1}{a_3+\ldots}}}=\left[a_0, a_1, \ldots\right] $$

Define 

\begin{align}
& h_{-2}=0 \quad h_{-1}=1 \quad h_n=a_n h_{n-1}+h_{n-2}\label{eq_continuous}\\
& k_{-2}=1 \quad  k_{-1}=0\quad  k_n=a_n k_{n-1}+k_{n-2}.
\end{align}

Then $\quad a_0+\cfrac{1}{a_1+\cfrac{1}{a_2+\cfrac{1}{a_3+\ldots +\cfrac{1}{a_n}}}}=\left[a_0, a_1, \ldots, a_n\right] =\frac{h_n}{k_n}$ and $\frac{h_n}{k_n} \rightarrow \alpha.$

\medskip

The following identity is stated in \cite{easter} without a proof (and with a typo), here we will provide three proofs of it.

\begin{theorem}
\label{th1}
If $\alpha$ is an irrational number, then the following identities hold:

\[ \boxed{\;\displaystyle a) \sum_{n=-1}^\infty a_{n+1}|h_n-\alpha k_n| = \alpha+1,}\qquad 
\boxed{\;\displaystyle b) \sum_{n=-1}^\infty a_{n+1}(h_n-\alpha k_n)^2 = \alpha.} \]

If $\alpha=p/q = h_N/k_N$ with $\gcd(p,q)=1$, then

\[\boxed{\;\displaystyle c) \sum_{n=-1}^{N-1} a_{n+1}|h_n-\alpha k_n| = \alpha+1-1/q,} \qquad 
\boxed{\;\displaystyle d) \sum_{n=-1}^{N-1} a_{n+1}(h_n-\alpha k_n)^2 = \alpha.}\]

\end{theorem}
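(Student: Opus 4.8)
The plan is to prove all four identities by telescoping, handling the squared identities (b), (d) first since they require no information about signs. Write $D_n = h_n-\alpha k_n$ for the scaled error of the $n$-th convergent, so that the claims become statements about $\sum a_{n+1}|D_n|$ and $\sum a_{n+1}D_n^2$. The engine of every proof is a single recurrence obtained by subtracting the defining recurrences \eqref{eq_continuous}: since $a_{n+1}h_n = h_{n+1}-h_{n-1}$ and $a_{n+1}k_n = k_{n+1}-k_{n-1}$,
\[
a_{n+1}D_n = (h_{n+1}-h_{n-1}) - \alpha(k_{n+1}-k_{n-1}) = D_{n+1}-D_{n-1},
\]
valid for all $n\geq -1$. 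I would also record the boundary data $D_{-2}=h_{-2}-\alpha k_{-2}=-\alpha$ and $D_{-1}=h_{-1}-\alpha k_{-1}=1$.

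For (b) and (d) I would multiply this recurrence by $D_n$ to get $a_{n+1}D_n^2 = D_nD_{n+1}-D_{n-1}D_n = E_{n+1}-E_n$, where $E_n := D_{n-1}D_n$. Summing from $n=-1$ collapses the telescope: $\sum_{n=-1}^N a_{n+1}D_n^2 = E_{N+1}-E_{-1} = D_ND_{N+1}+\alpha$, using $E_{-1}=D_{-2}D_{-1}=-\alpha$. In the irrational case (b) the tail $D_ND_{N+1}\to 0$ by the standard estimate $|D_n|<1/k_{n+1}\to 0$, giving the limit $\alpha$; in the rational case (d), choosing $N$ with $\alpha=h_N/k_N$ forces $D_N=0$, so the boundary term is exactly $0$ and the finite sum equals $\alpha$ on the nose. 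Note that the square identities need nothing about the signs of the $D_n$.

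For (a) and (c) I would instead invoke the standard fact that consecutive convergents straddle $\alpha$, so that $\operatorname{sign}(D_n)=(-1)^{n+1}$ and $|D_n|=(-1)^{n+1}D_n$ for all $n\geq -2$. Multiplying the basic recurrence by $(-1)^{n+1}$ and using that this sign is reversed at $n\pm 1$ converts it into $a_{n+1}|D_n| = |D_{n-1}|-|D_{n+1}|$, a two-step telescope. Summing from $n=-1$ to $N$ leaves only four endpoint terms,
\[
\sum_{n=-1}^N a_{n+1}|D_n| = |D_{-2}|+|D_{-1}|-|D_N|-|D_{N+1}| = \alpha+1-|D_N|-|D_{N+1}|.
\]
Letting $N\to\infty$ kills the last two terms and yields (a). For (c) I would stop at $n=N-1$; since $D_N=0$ the surviving correction is $-|D_{N-1}|$, and evaluating it through the determinant identity $h_{N-1}k_N-h_Nk_{N-1}=(-1)^N$ gives $|D_{N-1}|=|h_{N-1}k_N-h_Nk_{N-1}|/k_N=1/k_N=1/q$, producing $\alpha+1-1/q$.

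The telescoping itself is routine; the two points needing care are the inputs borrowed from continued-fraction theory. The first is the sign alternation $\operatorname{sign}(D_n)=(-1)^{n+1}$, which is essential for (a) and (c) (the square identities sidestep it entirely) and which I would justify from $k_nD_{n+1}-k_{n+1}D_n=(-1)^n$ together with the monotonicity of $|D_n|$. The second is the vanishing of the boundary terms: for the infinite sums this is the convergence estimate $|D_n|<1/k_{n+1}$, and for the finite (rational) sums it is the clean cancellation coming from $D_N=0$ and the value $|D_{N-1}|=1/q$. I expect the sign bookkeeping in (a)/(c) to be the only genuinely delicate step; a parallel ``carving'' proof should also be available by restricting the $SL_{+}(2,\ZZ)$ summation of Theorem~\ref{thm_short2} to the convergent path of $\alpha$, but the telescoping route is the most economical.
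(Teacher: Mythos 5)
Your argument is correct and is essentially the paper's own third proof (``Telescoping''): the same recurrence $a_{n+1}\varepsilon_n=\varepsilon_{n+1}-\varepsilon_{n-1}$, the same sign-alternation step for the absolute-value sums, and the same multiplication by $\varepsilon_n$ for the squared sums. You in fact supply slightly more detail than the paper on the rational endpoint terms (the $1/q$ correction) and on justifying the sign alternation, which the paper leaves implicit.
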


\begin{proof}[First proof. Carving]
 Note that the proof of Theorem~\ref{thm_short} relies on support lines; we do not actually require tangency since the construction involves cutting triangles and computing changes in area and $SL(2,\ZZ)$ perimeter. We only require that for every $(a,b)$, there exists a unique support line with equation $ax+by-\gamma_{a,b}=0$ and then employ the numbers $\gamma_{a,b}$.  
 Let us apply this procedure to the triangle $\Delta_\alpha$ with vertices $(0,0),(1,0),(0,\alpha)$ where $\alpha>0$ is an irrational number. In this case the $g$-triangle is the triangle $DCB$ in Figure~\ref{fig_tr}.

\begin{center}
\begin{figure}[h]
\begin{tikzpicture}[scale=1]
  \pgfmathsetmacro{\a}{sqrt(7)}
  \pgfmathsetmacro{\eps}{0.1}

  \coordinate (A) at (0,0);
  \coordinate (B) at (1,0);
  \coordinate (D) at (0,\a);
  \coordinate (C) at (1,\a);
  \coordinate (C1) at (1,{\a - 1});
  \coordinate (C2) at (1,{\a - 2});

  \pgfmathsetmacro{\t}{3 - \a}
  \coordinate (B1) at ({\t}, {\a - 2*\t});

  \pgfmathsetmacro{\xstart}{1}
  \pgfmathsetmacro{\ystart}{- \eps}

  \pgfmathsetmacro{\xend}{0}
  \pgfmathsetmacro{\yend}{\a + \eps}

  \draw[very thick] (A) -- (B) -- (D) -- cycle;

  \draw[dotted] (D) -- (C) -- (B);

  \draw[thin] (D) -- (C1);
  \draw[thin] (D) -- (C2);

  \pgfmathsetmacro{\xstart}{(3 - \ystart)/3}
  \pgfmathsetmacro{\xend}{(3 - \yend)/3}
  \draw[dashed] (\xstart,\ystart) -- (\xend,\yend);

  \node[below left] at (A) {A};
  \node[below right] at (B) {B};
  \node[above right] at (C) {C};
  \node[above left] at (D) {D};
  \node[right] at (C1) {$C_1$};
  \node[right] at (C2) {$C_2$};

  \fill (B1) circle (1pt);
  \node[right] at (B1) {$B_1$};
\end{tikzpicture}
\caption{The first few steps of our procedure consists of cutting $DCC_1$, then $DC_1C_2$, then $BC_2B_1$.}
\label{fig_tr}

\end{figure}
\end{center}

Note that $\gamma_{p,q} = \max_{\Delta_\alpha}(px+qy) = \max (p,q\alpha)$.

Then, if $\gamma_{a,b}=a, \gamma_{c,d}=c$ then $\gamma_{a+c,b+d}=a+c$ and $\gamma_{a,b}+\gamma_{c,d}-\gamma_{a+c,b+d}=0$. So, to get a non-zero term we should have, for example,

$$\gamma_{a,b}=a, \gamma_{c,d}=d\alpha, \gamma_{a+c,b+d}=a+c, \gamma_{a,b}+\gamma_{c,d}-\gamma_{a+c,b+d}=d\alpha-c.$$

So, it follows from the geometric continued fractions algorithm (``nose stretching algorithm", a term introduced by V.I. Arnold \cite{arnold2015lectures}, who learned it from B.N. Delaunay), used to construct continued fractions geometrically, then such $(c,d)$ is a convergent $(h_n,k_n)$ for certain $n$ and the number of pairs $(a,b)$, such that $\gamma_{a,b}+\gamma_{c,d}-\gamma_{a+c,b+d}\ne 0$ is equal to the coefficient $a_{n+1}$ (and these $(c,d)$ correspond to the intermediate convergents). Thus, each term $|h_n-\alpha k_n|$ appears $a_{n+1}$ times as $\gamma_{a,b}+\gamma_{c,d}-\gamma_{a+c,b+d}$, thus proving formulas for the area $\sum  a_{n+1}(h_n-\alpha k_n)^2$ which is equal to $\alpha$ (twice the area of the triangle $BCD$ in Figure~\ref{fig_tr}).

The formulas a), c) are proved similarly: when $\alpha$ is irrational, the $SL(2,\ZZ)$ length (Definition~\ref{def_sl2zlength}) of the polyline in construction tends to zero, so $$\sum_{n=-1}^\infty a_{n+1}|h_n-\alpha k_n| = \alpha+1.$$
When $\alpha=p/q$ is rational, the process to get the formula c) finishes in a finite number of steps and the $SL(2,Z)$-length of the hypotenuse is $1/q$.
\end{proof}
\begin{remark}
The above geometric process is equivalent to the following geometric Euclidean algorithm. Start with a triangle $ABC$ such that the angle at $A$ is right. The step of the algorithm: if $|AB|>|AC|$, subtract $|AC|$ from $|AB|$ forming a new triangle $AB'C$, such that $|B'B|=|AC|$; if $|AB|\leq|AC|$, do a symmetric operation. Then apply the step of the algorithm to the new triangle. Then the areas of the carved triangles are exactly $|h_n-\alpha k_n|^2/2$ and each such triangle is carved $a_{n+1}$ times.
\end{remark}

\begin{proof}[Second proof. Induction]
We start by rational case. One can directly check identities for $\alpha = a_0$ or $\alpha=a_0+\frac{1}{a_1}$. Then we proceed by induction.

Suppose $\alpha=a_0+\frac{1}{\beta}$. Note that $\beta = [a_1,\dots, a_{N}]$. Then $$\frac{h_n(\alpha)}{k_n(\alpha)} = a_0+\frac{k_{n-1}(\beta)}{h_{n-1}(\beta)},$$ 
\[h_n(\alpha)=k_{n-1}(\beta)+ a_0h_{n-1}(\beta), k_n(\alpha)=h_{n-1}(\beta),\text{ so }\]
\[h_n(\alpha)-\alpha k_n(\alpha) = k_{n-1}(\beta)+a_0h_{n-1}(\beta)-\frac{1+\beta a_0}{\beta}h_{n-1}(\beta) = k_{n-1}(\beta)-\frac{h_{n-1}(\beta)}{\beta}\]
Thus,
$$\sum_{n=-1}^{N-1} a_{n+1}|h_n-\alpha k_n| = a_0+ \frac{1}{\beta}\sum_{n=0}^{N-1} a_{n+1}|h_{n-1}(\beta)-\beta k_{n-1}(\beta)| = a_0+\frac{1+\beta}{\beta}  = 1+\alpha.$$
The sum for squares are obtained similarly,
$$\sum_{n=-1}^{N-1} a_{n+1}|h_n-\alpha k_n|^2 = a_0+ \frac{1}{\beta^2}\sum_{n=0}^{N-1} a_{n+1}|h_{n-1}(\beta)-\beta k_{n-1}(\beta)|^2 = a_0+\frac{\beta}{\beta^2} = \alpha.$$

The irrational case follows by taking the limit of the rational case. If $\beta$ is an irrational number then the coefficients $a_n(\beta)$ and convergents $h_n(\beta),k_n(\beta)$ are the limits of the corresponding coefficients and convergents for rational numbers $\alpha$ such that $\alpha_k\to\beta$, and

$$\lim\limits_{\alpha_k\to \beta} \sum_{n=-1}^{N-1} a_{n+1}(\alpha_k)|h_n(\alpha_k)-\alpha k_n(\alpha_k)|=\sum_{n=-1}^{N-1} a_{n+1}(\beta)|h_n(\beta)-\alpha k_n(\beta)|.$$
\end{proof}

\begin{proof}[Third proof. Telescoping] We prove the identities for the irrational case, the rational case can be handled similarly.
Define the (signed) approximation error
\[
\varepsilon_{n}:=h_{n}-\alpha k_{n}, \qquad \delta_{n} = |\varepsilon_{n}|, \qquad n\ge -2.
\]

It follows from \eqref{eq_continuous} that $\varepsilon_{n+1}=a_{n+1}\varepsilon_{n}+\varepsilon_{n-1}$, thus taking into account the alternating sign of $\varepsilon_{n}$, we see that for every
$n\ge -1$,
\[
a_{n+1}\delta_{n}
   =\delta_{n-1}-\delta_{n+1}.
\]
Summing it from $n=-1$ to~$N$ we obtain
\[
\sum_{n=-1}^{N}a_{n+1}\delta_{n}
  =\delta_{-2}+\delta_{-1}-\delta_{N}-\delta_{N+1}.
\]
Because $\delta_{N}\to 0$, letting $N\to\infty$ yields
\[
\sum_{n=-1}^{\infty} a_{n+1}\delta_{n}
  =\delta_{-2}+\delta_{-1}
  =\alpha+1 .
\]
To compute the second series, multiply the relation
$\varepsilon_{n+1}-\varepsilon_{n-1}=a_{n+1}\varepsilon_{n}$ by $\varepsilon_{n}$  to get
\[
a_{n+1}\varepsilon_{n}^{2}
   =\varepsilon_{n}\varepsilon_{n+1}-\varepsilon_{n-1}\varepsilon_{n}.
\]
Summing it from $n=-1$ to~$N$:
\[
\sum_{n=-1}^{N} a_{n+1}\varepsilon_{n}^{2}
   =\varepsilon_{N}\varepsilon_{N+1}-\varepsilon_{-2}\varepsilon_{-1}.
\]
Since $\varepsilon_{N}\to 0$ we conclude
\[
\sum_{n=-1}^{\infty} a_{n+1}\varepsilon_{n}^{2}
   =-\varepsilon_{-2}\varepsilon_{-1}
   =\alpha. \qedhere
\]

\end{proof}

It is not difficult to prove similar identities for general continued fractions, or for continuous fractions over $p$-adic numbers or power series, or even multidimensional continued fractions.

\begin{remark}
The error-sum functions $$E_1(\alpha)=\sum_{n=0}^{\infty} |h_n-\alpha k_n|, \qquad E_2(\alpha)= \sum_{n=0}^{\infty} |h_n-\alpha k_n|^2$$
 were studied systematically in \cite{ridley2000error}, there are closed formulas for the Euler constant $e=2.71828\dots $ in \cite{allouche2014variations,elsner2011error}, and for quadratic irrationalities in \cite{elsner2014error}. Ahn recently transferred the question to Pierce expansions (which have different combinatorics) and obtained fractal‐dimension results \cite{ahn2023error}. In \cite{baruchel2016error}  “split-denominator” variants were introduced. In \cite{elsner2016error} by studying the integrals of the error-sums certain relations between $\pi,\log(2),\zeta(3),\zeta(5),\dots$ are discovered. Our formulas comprise the errors for the intermediate approximants.  However, we do not know how our results might be useful for that direction, besides that one can probably find explicit formulas for $$E_p(\alpha)=\sum_{n=0}^{\infty} |h_n-\alpha k_n|^p, p\geq 3$$ for quadratic irrationalities $\alpha$. It would be very intriguing to find $$\sum_{n=-1}^{\infty} a_{n+1}|h_n-e k_n|^3.$$
\end{remark}

\section{Legendre transform}
\label{sec_3}
In this section, using the Legendre transform we establish the convergence behavior of $F_\Gamma(s)$ for $s>2/3$ and reprove Theorem~\ref{thm_short2} using telescoping arguments.

Recall that a function $g^*$ is called the Legendre transformation of a $C^1$ strictly concave function $g$ if the equation of the tangent line of the slope $\alpha$ to the graph of $g$ is given by $y(x)=\alpha x - g^*(\alpha)$. 

\begin{remark}\label{rem_hata}
Observe in \eqref{eq_legendrian} that $\frac{-\gamma_{a,b}}{b} = g^*(-\frac{a}{b})$ where $g^* (\alpha), \alpha\in (-\infty,0]$ is the Legendre transform of $g$.   
Consider a $g$-triangle. Then \begin{equation}\label{eq_ggg}\gamma_{a,b}+\gamma_{c,d}-\gamma_{a+c,b+d}=(b + d)\cdot g^*\left(- \frac{a + c}{b + d} \right) - b\cdot g^*(-\frac{a}{b}) - d\cdot g^*\left(- \frac{c}{d} \right).\end{equation}
\end{remark}

\begin{lemma}\label{lemma_second}
Consider a $\Gamma$-triangle, given by a function $g:[0,1]\to\RR$, $g\in C^3([0,1])$ with $g'(0)=0, g'(1)=-1$ with $0<c_1\leq |g''(x)|<c_2$ for $x\in[0,1]$. Then, $$\gamma_{a,b}+\gamma_{c,d}-\gamma_{a+c,b+d}=\frac{(g^*)''(-\frac{a+c}{b+d})}{2bd(b+d)}+O\left(\frac{1}{{(bd(b+d))}^{3/2}}\right).$$
\end{lemma}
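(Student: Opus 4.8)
The plan is to start from the Legendre transform expression for the term recorded in Remark~\ref{rem_hata}. Setting $w=-\frac{a+c}{b+d}$, $\alpha_1=-\frac{a}{b}$, $\alpha_2=-\frac{c}{d}$, equation \eqref{eq_ggg} becomes
\[
\gamma_{a,b}+\gamma_{c,d}-\gamma_{a+c,b+d}=(b+d)\,g^*(w)-b\,g^*(\alpha_1)-d\,g^*(\alpha_2).
\]
The structural fact that drives the computation is that $w$ is exactly the $(b,d)$-weighted mean of $\alpha_1,\alpha_2$, since $(b+d)w=b\alpha_1+d\alpha_2$. First I would record the two displacements, simplified using $ad-bc=1$:
\[
u:=\alpha_1-w=-\frac{1}{b(b+d)},\qquad v:=\alpha_2-w=\frac{1}{d(b+d)}.
\]
These yield the two moment identities that control everything: the first moment vanishes, $bu+dv=0$, and the second moment is
\[
bu^2+dv^2=\frac{1}{(b+d)^2}\Big(\tfrac1b+\tfrac1d\Big)=\frac{1}{bd(b+d)}.
\]

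Next I would Taylor expand $g^*$ about $w$ to second order with a third order Lagrange remainder, $g^*(\alpha_i)=g^*(w)+(g^*)'(w)(\alpha_i-w)+\tfrac12(g^*)''(w)(\alpha_i-w)^2+\tfrac16(g^*)'''(\xi_i)(\alpha_i-w)^3$, and substitute into the combination above. The zeroth order terms assemble into $(b+d)g^*(w)$ and cancel; the first order terms vanish by $bu+dv=0$; and the second order terms collapse, via the second moment identity, into $\tfrac12(g^*)''(w)\cdot\frac{1}{bd(b+d)}$. This delivers the claimed main term $\frac{(g^*)''(w)}{2bd(b+d)}$, leaving only the third order contribution to estimate.

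The regularity input is that $g\in C^3([0,1])$ with $0<c_1\le|g''|<c_2$ forces $g^*\in C^3$ on the slope interval $[-1,0]$, with $(g^*)''=1/g''$ bounded away from $0$ and $\infty$ and $(g^*)'''$ bounded; the relevant slopes $\alpha_1,\alpha_2,w$ all lie in $[-1,0]$ because a Farey pair satisfies $0\le c/d<a/b\le1$. Hence the remainder is $O(b|u|^3+d|v|^3)=O\!\big(b^{-2}(b+d)^{-3}+d^{-2}(b+d)^{-3}\big)$. It remains to absorb this into the stated error $O\!\big((bd(b+d))^{-3/2}\big)$: assuming $b\le d$ (so the first piece dominates) and using $b+d\ge d$,
\[
\frac{b^{-2}(b+d)^{-3}}{(bd(b+d))^{-3/2}}=\frac{d^{3/2}}{b^{1/2}(b+d)^{3/2}}\le\frac{d^{3/2}}{b^{1/2}d^{3/2}}=b^{-1/2}\le1,
\]
and symmetrically for $d\le b$, so the third order term is indeed $O\!\big((bd(b+d))^{-3/2}\big)$.

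The step I expect to be the main obstacle is guaranteeing that the implied constants are genuinely uniform over all admissible $(a,b,c,d)$: one must verify that the $C^3$ bounds on $g^*$ do not degenerate as the slopes approach the endpoints of $[-1,0]$, which is precisely where the hypotheses $g'(0)=0$, $g'(1)=-1$ and the two sided bound $0<c_1\le|g''|<c_2$ enter, and one must confirm that the elementary but asymmetric comparison above continues to hold in the regime where $b$ and $d$ are of very different sizes.
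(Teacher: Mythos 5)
Your proposal is essentially the paper's own proof: the paper likewise starts from \eqref{eq_ggg}, Taylor-expands $g^*$ about the mediant slope $w=-\frac{a+c}{b+d}$ with a third-order Lagrange remainder, uses the cancellation of the first-order terms (your moment identity $bu+dv=0$) and the collapse of the second-order terms into $\frac{1}{2bd(b+d)}$. You are in fact more careful on two points the paper glosses over: you verify explicitly that $b^{-2}(b+d)^{-3}+d^{-2}(b+d)^{-3}=O\bigl((bd(b+d))^{-3/2}\bigr)$, which the paper asserts without justification, and you spell out why $0<c_1\le|g''|<c_2$ with $g\in C^3$ gives uniform $C^3$ control of $g^*$ on $[-1,0]$. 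One bookkeeping caveat, which you share with the paper's own computation: under the stated convention $y=\alpha x-g^*(\alpha)$ for concave $g$ one has $(g^*)''=1/g''<0$, and since the second-order terms enter through $-\bigl(b\,g^*(\alpha_1)+d\,g^*(\alpha_2)\bigr)$, your expansion actually yields the main term $-\tfrac12(g^*)''(w)\cdot\frac{1}{bd(b+d)}$, which is the positive quantity matching the left-hand side (check against the parabola, where the term equals $\frac{1}{4(a+b)(c+d)(a+b+c+d)}>0$ while $(g^*)''(w)<0$); the sign is immaterial for Theorem~\ref{thm_convergence}, where only $|(g^*)''|$ being bounded away from $0$ and $\infty$ is used, but it is worth flagging rather than silently writing $+\tfrac12(g^*)''(w)$.
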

\begin{proof} Indeed, there exist $\xi_1\in [-\frac{a+c}{b+d},-\frac{a}{b}], \xi_2\in  [-\frac{c}{d},-\frac{a+c}{b+d}]$ such that
\[
\gamma_{a,b}+\gamma_{c,d}-\gamma_{a+c,b+d}=(b + d) g^*\left(- \frac{a + c}{b + d} \right) - b g^*(-\frac{a}{b}) - d g^*\left(- \frac{c}{d} \right)=
\]
\[
= b \left[ g^*\left(- \frac{a + c}{b + d} \right) - g^*\left(-\frac{a}{b}\right) \right]
+ d \left[ g^*\left(- \frac{a + c}{b + d} \right) - g^*\left(- \frac{c}{d} \right) \right]=
\]
\[
= b \left[ (g^*)'\left(- \frac{a + c}{b + d} \right) \cdot \frac{1}{b(b + d)}
+ \frac{1}{2} (g^*)''\left(- \frac{a + c}{b + d} \right) \cdot \left( \frac{1}{b(b + d)} \right)^2 + \frac{(g^*)'''(\xi_1)}{b^3(b+d)^3}\right]+
\]
\[
\quad +d \left[ (g^*)'\left(- \frac{a + c}{b + d} \right) \cdot \frac{-1}{d(b + d)}
+ \frac{1}{2} (g^*)''\left(- \frac{a + c}{b + d} \right) \cdot \left( \frac{1}{d(b + d)} \right)^2 + \frac{(g^*)'''(\xi_2)}{d^3(b+d)^3}\right]=
\]
\[
= \frac{1}{2} (g^*)''\left(- \frac{a + c}{b + d} \right) \cdot \frac{1}{b d (b + d)} +O\left(\frac{1}{{(bd(b+d))}^{3/2}}\right).
\]
\end{proof}

\begin{theorem}\label{thm_convergence} Consider a $\Gamma$-triangle, given by a function $g:[0,1]\to\RR$ with $g'(0)=0, g'(1)=-1$ with $0<c_1\leq |g''(x)|<c_2$ for $x\in[0,1]$. Then $F_{\Gamma}(s)$ converges for $s>2/3$ and diverges at $s=2/3$.
\end{theorem}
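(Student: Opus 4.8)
The plan is to combine the local estimate of Lemma~\ref{lemma_second} with the convergence analysis of the Mordell--Tornheim series recorded in Lemma~\ref{lemma_parab}, reducing $F_\Gamma(s)$ to a comparison with $\omega(s)$. The first step is a two-sided estimate for the individual terms. Since $g\in C^3$ is strictly concave with $c_1\le|g''|\le c_2$, the Legendre transform satisfies $(g^*)''(\alpha)=-1/g''(x(\alpha))$, where $g'(x(\alpha))=\alpha$; hence $(g^*)''$ is bounded on $[-1,0]$ between $1/c_2$ and $1/c_1$, away from $0$ and $\infty$. Feeding this into Lemma~\ref{lemma_second} gives $f_\Gamma(a,b,c,d)\cdot bd(b+d)=\tfrac12(g^*)''(-\tfrac{a+c}{b+d})+O((bd(b+d))^{-1/2})$, so there exist constants $0<C_1\le C_2$ and a threshold $M$ with $\tfrac{C_1}{bd(b+d)}\le f_\Gamma(a,b,c,d)\le \tfrac{C_2}{bd(b+d)}$ whenever $bd(b+d)>M$. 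Raising to the power $s>0$, we obtain $f_\Gamma^s\asymp (bd(b+d))^{-s}$ up to the fixed constants $C_1^s,C_2^s$, outside a finite set of terms.

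The second step is the reindexing. The terms of $F_\Gamma(s)$ are the carving tiles $\Delta_{a,b,c,d}$, which correspond to Farey pairs $c/d<a/b$ in $[0,1]$ with $ad-bc=1$, i.e. to primitive vectors $(a+c,b+d)$ of slope in $(-1,0)$. I would then check that the assignment sending a tile to the ordered pair $(b,d)$ of parent denominators is injective and, up to finitely many boundary terms, surjects onto all ordered coprime pairs $(b,d)$ with $b,d\ge 1$: given coprime $(b,d)$ with $b\ge 2$, the congruence $ad\equiv 1\pmod b$ has a unique solution $a\in\{1,\dots,b-1\}$, and then $c=(ad-1)/b$ satisfies $0\le c\le d$ (the inequality $d(a-b)\le 1$ being automatic), producing exactly one admissible $(a,c)$; the cases $b=1$ or $d=1$ are handled directly. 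Consequently, up to constants and a finite set of terms, $F_\Gamma(s)$ is comparable to $\sum_{\gcd(b,d)=1}(bd(b+d))^{-s}$.

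The third step draws the two conclusions. For the upper bound I drop coprimality: the sum is at most $C_2^s\sum_{b,d\ge 1}(bd(b+d))^{-s}=C_2^s\,\omega(s)$, which is finite for $s>2/3$ by Lemma~\ref{lemma_parab}, so adding the finite exceptional set gives $F_\Gamma(s)<\infty$. For divergence at $s=2/3$ I factor out the gcd: writing $(m,n)=e(b,d)$ with $\gcd(b,d)=1$ and $mn(m+n)=e^3bd(b+d)$ yields $\omega(s)=\zeta(3s)\sum_{\gcd(b,d)=1}(bd(b+d))^{-s}$, hence the coprime sum equals $\omega(s)/\zeta(3s)$. At $s=2/3$ the factor $\zeta(2)$ is finite while $\omega(2/3)=\infty$, so the coprime sum diverges; restricting to tiles with $bd(b+d)>M$ and using $f_\Gamma^{2/3}\ge C_1^{2/3}(bd(b+d))^{-2/3}$ over the (cofinite in the coprime pairs) admissible index set forces $F_\Gamma(2/3)=\infty$.

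I expect the main obstacle to be the two-sided control of the terms rather than the reindexing. The upper bound is routine once $f_\Gamma\le C_2/(bd(b+d))$, but the divergence demands a genuine lower bound $f_\Gamma\ge C_1/(bd(b+d))$ holding for a cofinal family with $b\asymp d$ — precisely the regime that drives the divergence of the Mordell--Tornheim series. This is where I must use $|g''|\le c_2$ to keep $(g^*)''$ bounded below, and where I must confirm that the $O((bd(b+d))^{-3/2})$ error of Lemma~\ref{lemma_second} is of genuinely smaller order, so that its implied constant cannot swallow the main term for $b\asymp d$ large. A secondary point to verify carefully is that the admissible index set really exhausts (up to finitely many exceptions) all coprime pairs, so that the division by $\zeta(3s)$ does not remove the divergence.
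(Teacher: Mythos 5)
Your proposal is correct and follows essentially the same route as the paper: apply Lemma~\ref{lemma_second}, use $0<c_1\le|g''|\le c_2$ to bound $|(g^*)''|$ away from $0$ and $\infty$ so that $f_\Gamma(a,b,c,d)\asymp (bd(b+d))^{-1}$ outside finitely many terms, and compare with the Mordell--Tornheim series of Lemma~\ref{lemma_parab}. You additionally make explicit the bookkeeping the paper leaves implicit (the bijection between tiles and coprime pairs $(b,d)$, and the factor $\zeta(3s)$ relating the coprime sum to $\omega(s)$), which is a welcome refinement rather than a different method; the only nitpick is the sign $(g^*)''=1/g''$ rather than $-1/g''$, which is immaterial since only $|(g^*)''|$ enters the bounds.
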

\begin{proof}
Since $|g''(x)|$ is bounded above and below by a positive constant, $(g^*)''$ is separated from $0$ on $[-1,0]$. It follows from Lemma~\ref{lemma_second} that the terms of $F_{\Gamma}$ differ by at most constant (depending on maximum and minimum of $(g^*)''$ on $[-1,0]$) from the corresponding terms $\frac{1}{(mn(m+n))^s}$ in $\omega(s)$ for a parabola, which has the desired behavior at $s=2/3$, see Lemma~\ref{lemma_parab}.
\end{proof}

Hence, we have the same convergence behavior for any $\Gamma$-triangle with a strictly concave curve $\Gamma$.

Next, we reprove the Theorem~\ref{thm_short2} using a telescoping argument as follows.

\begin{proof}[Proof of Theorem~\ref{thm_short2}]
Without loss of generality, consider a $\Gamma$-triangle given by a concave $g:[0,1]\to\RR$ with $g'(0)=0, g'(1)=-1$, such that the vertex of the $\Gamma$-triangle, which is not on $\Gamma$, is the origin $(0,0)$.

Start with \eqref{eq_thm11}. Note that a certain telescoping is apparent, for, using \eqref{eq_ggg}, we can define

\[S_{a+c,b+d}^{L}=\sum_{n=0}^\infty \left(\gamma_{a+c,b+d}+ \gamma_{n(a+c)+a,n(b+d)+b} - \gamma_{(n+1)(a+c)+a,(n+1)(b+d)+b}\right)=\]

\[=\lim\limits_{n\to\infty}(n\cdot\gamma_{a+c,b+d}+\gamma_{a,b}-\gamma_{n(a+c)+a,n(b+d)+b})=\]

\[=\lim\limits_{n\to\infty}\left(-n(b+d)g^*(-\frac{a+c}{b+d})-bg^*(-\frac{a}{b})+(n(b+d)+b)g^*(-\frac{n(a+c)+a}{n(b+d)+b})\right)=\]

\[=b\left(g^*(-\frac{a+c}{b+d}-g^*(-\frac{a}{b}))\right)-\frac{1}{b+d}(g^*)'(-\frac{a+c}{b+d}).\]

Similarly we define

\[S_{a+c,b+d}^{R}=\sum_{n=0}^\infty (\gamma_{a+c,b+d}\,+\, \gamma_{n(a+c)+c,n(b+d)+d} \,-\, \gamma_{(n+1)(a+c)+c,(n+1)(b+d)+d})=\]

\[=d\left(g^*(-\frac{a+c}{b+d})-g^*(-\frac{c}{d})\right)+\frac{1}{b+d}(g^*)'(-\frac{a+c}{b+d}).\]

Observe now that $S_{a+c,b+d}^{R}+S_{a+c,b+d}^{L} = \gamma_{a,b}+\gamma_{c,d}-\gamma_{a+c,b+d}$. Thus, 

\[\sum^+(\gamma_{a,b}+\gamma_{c,d}-\gamma_{a+c,b+d})=S=\sum \left(S_{a+c,b+d}^{R}+S_{a+c,b+d}^{L}\right),\]

where the last sum runs over all primitive vectors $(a+c,b+d)\in \ZZ_{>0}^2$ such that $0<a+c< b+d$. Finally, note that 
$$S=\sum \left(S_{a+c,b+d}^{R}+S_{a+c,b+d}^{L}\right) = 2S-((g^*)'(-1)-(g^*)'(0)),$$ because all interior contributions cancel and only the endpoint derivatives survive; so $S=(g^*)'(-1)-(g^*)'(0)$. Since the vertex of the $\Gamma$-triangle is the origin, a direct check shows that $(g^*)'(-1)-(g^*)'(0)$ is the $SL(2,\ZZ)$-length of two straight sides of the $\Gamma$-triangle.

To prove \eqref{eq_thm22} we need to perform telescoping for 

\[p(a,b,c,d)=bd\left(g^*(-\frac{a}{b})-g^*(-\frac{c}{d})\right)^2,\] 
because, as one can verify, 
\begin{center}
$$p(a,b,c,d)-p(a+c,b+d,c,d)-p(a,b,a+c,b+d)=-(\gamma_{a,b}+\gamma_{c,d}-\gamma_{a+c,b+d})^2,$$
\end{center}

thus 

\[\sum^+(\gamma_{a,b}+\gamma_{c,d}-\gamma_{a+c,b+d})^2 =  \lim_{n\to\infty} \sum_{\operatorname{Farey}_n} p(a,b,c,d)-p(1,1,0,1)\] where on the right we sum over all pairs of consecutive fractions $\frac{c}{d},\frac{a}{b}$ in $\operatorname{Farey}_n$. Now, 
$$\sum_{\operatorname{Farey}_n} p(a,b,c,d)=\sum_{\operatorname{Farey}_n} bd \left(g^*(-\frac{c}{d})-g^*(-\frac{a}{b})\right)^2,$$
which is a Riemann sum for $\int\limits_{-1}^0 ((g^*)')^2$, because $\frac{1}{bd}$ is the length of the interval $[-\frac{a}{b},-\frac{c}{d}]$, and these intervals from $\operatorname{Farey}_n$ partition $[-1,0]$.

Then, $p(1,1,0,1)=(g^*(-1)-g^*(0))^2$, so 

\begin{equation}
\label{eq_gg}
\sum^+(\gamma_{a,b}+\gamma_{c,d}-\gamma_{a+c,b+d})^2=-\big(g^*(-1)-g^*(0)\big)^2+\int_{-1}^0 ((g^*)')^2,
\end{equation}
which equals twice the area of our $\Gamma$-triangle. Note that in the above proof we assumed that $g^*$ is $C^1([-1,0])$. If it is not so, we may approximate $g$ by strictly concave $C^2([0,1])$ functions, and then take the limit in all formulas.

\end{proof}

\section{Connections to Hata's work}
\label{sec_mixed}

In \cite{hata1995farey}, Masayoshi Hata had developed a method to derive summation identities involving Farey fractions using a piecewise linear Schauder basis. His approach leads to several classical and new identities, including an interesting formula for Euler’s constant \( \gamma \) that we discuss below. The construction uses Farey pairs and associated fundamental intervals. Interestingly, his approach is equivalent to ours, up to Legendre transform. Below we recall Hata's definitions and results and show how to rephrase them in our language.

To get the notation straight, suppose that we consider a $\Gamma$-triangle such that the tangent at $\Gamma(0)$ has slope $1$ and the tangent at $\Gamma(1)$ is horizontal. Thus, the possible slopes of tangents belong to $[0,1]$.

{\bf Farey intervals and Schauder bases.} Let $F$ be the set of Farey pairs in $[0,1]$. 
Each \( I = [c/d, a/b] \in F \) is called a (fundamental) {\it Farey interval}, $ad-bc=1$.

\begin{remark}
Note that elements of $F$ bijectively correspond to elements in $SL_+(2,\ZZ)$ with $b\geq a$ and $d\geq c$.
\end{remark}

To each interval \( I = [c/d, a/b] \), Hata associates a Schauder base function:
\[
S_I(x) = \frac{b + d}{2} \left( |a - bx| + |c - dx| - |a + c - (b + d)x| \right).
\]
The function \( S_I \) is supported on \( I \), continuous, piece-wise linear, zero outside $I$, and has unit $L_\infty$ norm. The collection \( \{S_I\}_{I \in F} \) is a Schauder basis for \( C([0, 1]) \), in the sense that every \( f \in C([0,1]) \) admits a unique expansion:
\[
f(x) = f(0) + (f(1) - f(0))x + \sum_{I \in F} c_I(f) S_I(x).
\]
The coefficients \( c_I(f) \) can be explicitly found:
\[
c_I(f) = f\left(\frac{a + c}{b + d}\right) - \frac{b}{b + d} f\left(\frac{a}{b}\right) - \frac{d}{b + d} f\left(\frac{c}{d}\right).
\]

\begin{remark}
These functions $S_I(s)$ were known also to H. Montgomery and J. Hubbard and play a major role in a thesis \cite{haynes2006tools} about metric number theory, where, in particular, a number of results were derived from the fact that the derivatives of $S_I$ form a complete and orthogonal basis and a complete system of martingale differences for $L^2([0,1])$, \cite{haynes2008martingale}.

\end{remark}

\begin{figure}[h]
\begin{tikzpicture}[scale=6]

\draw[->] (-0.02, 0) -- (1.05, 0) node[below] {};
\draw[->] (0, -0.05) -- (0, 0.7) node[left] {};

\draw[thick, domain=0:1, samples=200, smooth, variable=\x]
  plot ({\x}, {0.6 * sin(180 * \x^(0.8))});

\coordinate (A) at (0,0);
\coordinate (B) at ({1/3}, {0.6 * sin(180 * (1/3)^0.8)});
\coordinate (C) at ({2/5}, {0.6 * sin(180 * (2/5)^0.8)});
\coordinate (D) at ({1/2}, {0.6 * sin(180 * (1/2)^0.8)});
\coordinate (E) at (1,0);

\foreach \x in {1/3, 2/5, 1/2} {
  \draw[dashed] ({\x},0) -- ({\x},{0.6 * sin(180 * \x^(0.8))});
}

\draw (A) -- (D) -- (E) -- cycle;  
\draw (A) -- (B) -- (D);           
\draw (B) -- (C) -- (D);           

\foreach \point in {A, B, C, D, E} {
  \fill (\point) circle (0.3pt);
}

\node[above left] at (A) {A};
\node[above] at (B) {B};
\node[above] at (C) {C};
\node[above] at (D) {D};
\node[above right] at (E) {E};

\node[below] at (0.03, 0) {\small 0}; 
\node[below] at (1/3, 0) {\small $\frac{1}{3}$};
\node[below] at (2/5, 0) {\small $\frac{2}{5}$};
\node[below] at (1/2, 0) {\small $\frac{1}{2}$};
\node[below] at (1, 0) {\small 1};

\end{tikzpicture}
\caption{The first few steps of constructing $c_I$ for $I=[0,1],[0,1/2], [1/3,1/2]$. }
\label{fig_last}
\end{figure}

{\bf Parseval-type identity.} One of Hata's key results is a Parseval-type identity for these coefficients
\cite[Corollary 3.4]{hata1995farey}: For any \( f \in C^2([0, 1]) \),
\begin{equation}\label{eq_hataarea}
\sum_{I \in F} (b + d)^2 c_I(f)^2 = \int_0^1 (f'(x))^2\,dx- (f(1) - f(0))^2  \Leftrightarrow \text{Theorem~\ref{thm_short}}, \eqref{eq_thm2}. 
\end{equation}

This result mirrors the classical Parseval identity in Fourier analysis. 

Following Hata's work \cite{hata1995farey}, our formulas can be interpreted as Parseval-type identities, since, by Remark~\ref{rem_hata}, if $I={[\frac{c}{d},\frac{a}{b}]}$ and $f=g^*$, then
$$\gamma_{a,b} + \gamma_{c,d} - \gamma_{a+c,b+d}=(b+d)c_I(f).$$  
Hence the above formula \eqref{eq_hataarea} is equivalent to the part \eqref{eq_thm2} in Theorem~\ref{thm_short} for the area and we proved it in Section~\ref{sec_3} as \eqref{eq_gg}.

Consider \cite[Theorem 5.3]{hata1995farey}. For any $f\in C^2[0,1]$ we have
$$\sum_{I \in F} (b+d)c_I(f) = f'(0)-f'(1)  \Leftrightarrow \text{Theorem~\ref{thm_short}}, \eqref{eq_thm1}.$$
Due to Remark~\ref{rem_hata} this  formula is equivalent to the part \eqref{eq_thm1} in Theorem~\ref{thm_short} for the $SL(2,\ZZ)$-perimeter (and we proved it in Section~\ref{sec_3}.). Note that the sum does not depend on the behavior of $f$ inside $[0,1]$ once we fixed $f'(0),f'(1)$, similarly \eqref{eq_thm1} depends only on two straight sides of a $g$-triangle.

{\bf Application to Euler’s constant.} Another useful identity is \cite[Theorem 3.3]{hata1995farey}:  For any $f\in C([0,1])$, $g\in C^2([0,1])$ we have

\begin{multline}\label{eq_mixedhata}
\sum_{I \in F} (b+d)^2c_I(f)c_I(g) = \\ 
=f(1)g'(1)-f(0)g'(0)-(f(1)-f(0))(g(1)-g(0))-\int_0^1f(x)g''(x)\,dx.
\end{multline}

It has an immediate Corollary \cite[Corollary 3.5]{hata1995farey}: For any \( f \in C([0,1]) \),
\begin{equation}\label{eq_g}
\sum_{I \in F} \frac{c_I(f)}{bd} = 2 \int_0^1 f(x)\,dx - f(0) - f(1).
\end{equation}
This can be seen in Figure~\ref{fig_last} (where $f(0)=f(1)=0$) for the area of each small triangle is equal to $\frac{c_I}{bd}$, and the sum of the areas is the integral (the area under the curve).
 Let
\[
\psi(x) = x \left\{\frac{1}{x}\right\} \left(1 - \left\{\frac{1}{x}\right\}\right),
\]
where \( \{x\} \) denotes the fractional part of \( x \). Hata shows that for all Farey intervals \( I = [c/d, a/b] \) except intervals $[0,1/n]$, one has:
\[
c_I(\psi) = \frac{1}{a c (a + c)(b + d)}.
\]
It turns out that:
\[
\int_0^1 \psi(x)\,dx = \gamma - \frac{1}{2},
\]
where \( \gamma \) is Euler’s constant. We now apply \eqref{eq_g} with $f=\psi$, obtaining the formula:
\[
2 \int_{1/{n+1}}^{1/n} \psi(x)\,dx = \sum_{I \in F, I\subset [\frac{1}{n+1},\frac{1}{n}]} \frac{c_I(\psi)}{bd} - \psi(\frac{1}{n}) - \psi(\frac{1}{n+1}).
\]
Since \( \psi(0) = \psi(1) = \psi(\frac{1}{n})= 0 \), after summatioin this becomes:
\[
2(\gamma - 1/2) = \sum_{I \in F\setminus \{[\frac{1}{n},\frac{1}{n+1}], n=1,2,\dots\}} \frac{1}{abcd(a+c)(b+d)}.
\]
Thus:
\begin{theorem}[\cite{hata1995farey}, Theorem 4.1]
Euler's constant \( \gamma \) can be expressed as
\begin{equation}\label{eq_hata}
\gamma = \frac{1}{2} + \frac{1}{2} \sum_{\substack{b,d \geq 1 \\ \gcd(b,d) = 1 \\ b \geq 2}} \frac{1}{abcd(a + c)(b + d)},
\end{equation}
where the sum ranges over Farey intervals \( I = [c/d, a/b] \in F \) with \(b\geq 2 \), i.e. over all Farey intervals except $[0,1/n], n=1,2,\dots$
\end{theorem}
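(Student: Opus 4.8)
The plan is to derive the formula for $\gamma$ directly from Hata's Corollary~\eqref{eq_g} applied to the specific function $f=\psi$, using two ingredients that the excerpt has already supplied: the explicit coefficient formula $c_I(\psi)=\frac{1}{ac(a+c)(b+d)}$ valid for all Farey intervals $I=[c/d,a/b]$ with $b\geq 2$ (equivalently, all intervals except $[0,1/n]$), and the integral evaluation $\int_0^1\psi(x)\,dx=\gamma-\tfrac12$. First I would fix $n\geq 1$ and apply \eqref{eq_g} not to the whole interval $[0,1]$ but to the subinterval $[\tfrac1{n+1},\tfrac1n]$, which is itself the Farey interval $[\tfrac1{n+1},\tfrac1n]$ (here $a=c=1$, $b=n$, $d=n+1$). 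On this subinterval the same Parseval-type telescoping holds, so that $2\int_{1/(n+1)}^{1/n}\psi = \sum_{I\subset[\frac1{n+1},\frac1n]}\frac{c_I(\psi)}{bd}-\psi(\tfrac1n)-\psi(\tfrac1{n+1})$.

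Next I would use the boundary vanishing of $\psi$. Since $\{1/x\}=0$ whenever $1/x$ is an integer, one has $\psi(1/n)=0$ for every integer $n\geq 1$, and also $\psi(0)=\psi(1)=0$ by the defining product structure; hence every boundary term $\psi(1/n),\psi(1/(n+1))$ drops out of the local identity. Summing the local identities over all $n\geq 1$ then stitches the subintervals $[\tfrac1{n+1},\tfrac1n]$ back into $(0,1]$: the left-hand sides sum telescopically to $2\int_0^1\psi(x)\,dx=2(\gamma-\tfrac12)$, while the right-hand sides assemble into $\sum_{I}\frac{c_I(\psi)}{bd}$ taken over precisely those Farey intervals contained in some $[\tfrac1{n+1},\tfrac1n]$, which is exactly the set of all Farey intervals with $b\geq 2$ (equivalently, $I\neq[0,1/n]$). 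Substituting $c_I(\psi)=\frac{1}{ac(a+c)(b+d)}$ and dividing through by $bd$ converts the summand into $\frac{1}{abcd(a+c)(b+d)}$, giving $2(\gamma-\tfrac12)=\sum\frac{1}{abcd(a+c)(b+d)}$, which is \eqref{eq_hata} after rearranging to solve for $\gamma$.

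The step I expect to be the genuine obstacle is justifying the summation over $n$ rigorously, i.e.\ that the partition of $(0,1]$ into the countably many Farey intervals $[\tfrac1{n+1},\tfrac1n]$ is compatible with Hata's expansion, and that interchanging the sum over $n$ with the sum over $I\subset[\tfrac1{n+1},\tfrac1n]$ is legitimate. Two points need care here. First, the endpoints $1/n$ accumulate at $0$, so I must confirm that $\psi$ remains in the class $C([0,1])$ required by \eqref{eq_g} up to and including $x=0$; this follows from $0\leq\psi(x)\leq x$, giving continuity at $0$ with $\psi(0)=0$, but it must be stated. Second, the absolute convergence of the double sum $\sum_n\sum_{I\subset[\frac1{n+1},\frac1n]}\frac{1}{abcd(a+c)(b+d)}$ must be checked to permit the rearrangement; this is where the convergence machinery of Section~\ref{sec_two} (all terms positive, comparable to the Mordell--Tornheim tail) guarantees unconditional summability, so Tonelli/Fubini for nonnegative series applies and no reordering issue arises. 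Once absolute convergence and the $C([0,1])$ membership of $\psi$ are in hand, the remaining manipulations are the routine substitution of the explicit $c_I(\psi)$ and the bookkeeping identifying the index set $\{I:b\geq2\}$.
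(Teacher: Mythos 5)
Your proposal follows essentially the same route as the paper: apply Hata's identity \eqref{eq_g} with $f=\psi$ localized to each Farey interval $[\tfrac{1}{n+1},\tfrac{1}{n}]$, use $\psi(1/n)=0$ to eliminate the boundary terms, sum over $n$, and substitute the explicit coefficient $c_I(\psi)=\tfrac{1}{ac(a+c)(b+d)}$. The extra care you take with continuity of $\psi$ at $0$ and with absolute convergence (positivity plus the Mordell--Tornheim comparison) is correct and only makes explicit what the paper leaves implicit.
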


We derive a short telescopic proof of the above formula in \cite{kalinineuler} and derive it from more general theorems about summing over topographs in \cite{kalinintopo}.

{\bf Identities arising from mixed areas of triangles.} The identities \eqref{eq_mixedhata},\eqref{eq_g} can be reinterpreted as mixed areas using the following theorem, inspired by Hata’s framework:

\begin{theorem}
Let $g_1, g_2$ be two concave functions as in Section~\ref{sec_one}, and let $\gamma_{a,b}$ and $\delta_{a,b}$ be the coefficients of the supporting lines $ax + by = \gamma_{a,b}$ and $ax + by = \delta_{a,b}$ for the graphs of $g_1$ and $g_2$, respectively. Then,
\[
\sum^{+} (\gamma_{a,b} + \gamma_{c,d} - \gamma_{a+c,b+d})(\delta_{a,b} + \delta_{c,d} - \delta_{a+c,b+d})
\]
is equal to the mixed area of the $g_1$-triangle and the $g_2$-triangle.
\end{theorem}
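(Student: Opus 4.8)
The plan is to read the claimed identity as the polarization of the area identity \eqref{eq_thm2}, \eqref{eq_gg} already established for a single curve. Both the left-hand side and the mixed area are symmetric bilinear forms in the pair $(g_1,g_2)$, and they agree on the diagonal $g_1=g_2$ by Theorem~\ref{thm_short}; hence they must coincide, and the real content is to make the word ``mixed area'' precise and to verify bilinearity. I would make this concrete through the Legendre picture of Remark~\ref{rem_hata}. Writing $f_i=g_i^*$ and $I=[\tfrac cd,\tfrac ab]$, we have $\gamma_{a,b}+\gamma_{c,d}-\gamma_{a+c,b+d}=(b+d)\,c_I(f_1)$ and likewise for $\delta$ with $f_2$, so the left-hand side becomes
\[
\sum^{+}(b+d)^2\,c_I(f_1)\,c_I(f_2),
\]
which is patently bilinear and symmetric because the Schauder coefficient $c_I$ is linear in the expanded function.

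Since on the diagonal this sum equals $F_g(2)=2\,A(T_g)$, polarizing shows it must equal the bilinear form whose associated quadratic form is $g\mapsto 2A(T_g)$. Working in the normalization of Section~\ref{sec_3} (slopes in $[-1,0]$, which costs no generality by the $SL(2,\ZZ)$-invariance exploited in Theorem~\ref{thm_short2}), formula \eqref{eq_gg} identifies that quadratic form as $Q(f)=-\bigl(f(-1)-f(0)\bigr)^2+\int_{-1}^0 (f')^2$, so the sum equals
\[
-\left(g_1^*(-1)-g_1^*(0)\right)\left(g_2^*(-1)-g_2^*(0)\right)+\int_{-1}^0 (g_1^*)'(g_2^*)'\,,
\]
which I would take to be twice the mixed area of the two $g$-triangles, the normalization being fixed so that the diagonal reproduces \eqref{eq_gg}.

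Rather than invoke polarization as a black box, I would prove this bilinear identity directly by the telescoping of Section~\ref{sec_3}. Set $p(a,b,c,d)=bd\,(g_1^*(-\tfrac ab)-g_1^*(-\tfrac cd))(g_2^*(-\tfrac ab)-g_2^*(-\tfrac cd))$ and check the mixed carving relation
\[
p(a,b,c,d)-p(a+c,b+d,c,d)-p(a,b,a+c,b+d)=-(\gamma_{a,b}+\gamma_{c,d}-\gamma_{a+c,b+d})(\delta_{a,b}+\delta_{c,d}-\delta_{a+c,b+d}),
\]
which is the bilinear companion of the single-curve relation and follows from the same algebra. Summing over $\operatorname{Farey}_n$, the interior terms cancel and the surviving sum is the Riemann sum for $\int_{-1}^0 (g_1^*)'(g_2^*)'$, while the leftover boundary term is $p(1,1,0,1)=\left(g_1^*(-1)-g_1^*(0)\right)\left(g_2^*(-1)-g_2^*(0)\right)$; in the limit this yields exactly the displayed expression.

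The step I expect to be the main obstacle is pinning down the geometric meaning of ``mixed area,'' since a $g$-triangle is the non-convex epigraph-type region above a concave arc, so the classical Minkowski relation $A(K\oplus L)=A(K)+2V(K,L)+A(L)$ does not apply to $T_g$ verbatim. The clean resolution is to \emph{define} twice the mixed area by the integral form above and then justify the name by the termwise tiling: the triangles $\Delta_{a,b,c,d}$ carving out $T_{g_1}$ and $T_{g_2}$ are $SL(2,\ZZ)$-equilateral with the same orientation and $SL(2,\ZZ)$-side-lengths $f_{g_1}$ and $f_{g_2}$, and the mixed area of two similarly placed equilateral triangles is the product of their side lengths up to the fixed constant, so each term $(\gamma_{a,b}+\gamma_{c,d}-\gamma_{a+c,b+d})(\delta_{a,b}+\delta_{c,d}-\delta_{a+c,b+d})=f_{g_1}f_{g_2}$ is precisely the contribution of the matching pair of tiles. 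Finally, if $g_1^*$ or $g_2^*$ fails to be $C^1$, I would pass to the limit through strictly concave $C^2$ approximations, exactly as at the end of the proof of Theorem~\ref{thm_short2}.
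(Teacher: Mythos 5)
Your proof is correct, but it takes a genuinely different route from the paper's. The paper polarizes geometrically: it forms the Minkowski sum $X$ of the $g_1$-triangle with $\epsilon$ times the $g_2$-triangle, notes that $X$ is again a $g$-triangle with support data $\beta_{a,b}=\gamma_{a,b}+\epsilon\delta_{a,b}$, applies the area formula of Theorem~\ref{thm_short} to $X$, and reads off the mixed sum as the coefficient of $\epsilon$. You polarize algebraically: you bilinearize the telescoping function $p$ of Section~\ref{sec_3} and rerun the Farey/Riemann-sum argument, landing on the explicit value $\int_{-1}^{0}(g_1^*)'(g_2^*)'-\left(g_1^*(-1)-g_1^*(0)\right)\left(g_2^*(-1)-g_2^*(0)\right)$, the bilinear companion of \eqref{eq_gg} and consistent with Hata's mixed identity \eqref{eq_mixedhata}. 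Your mixed carving relation does hold: both sides are symmetric bilinear in the triples $\left(g_i^*(-\tfrac{a}{b}),\,g_i^*(-\tfrac{c}{d}),\,g_i^*(-\tfrac{a+c}{b+d})\right)$ and agree on the diagonal by the paper's quadratic identity, and the interior cancellation is purely combinatorial, so it transfers verbatim from the quadratic case. What your route buys is an explicit closed form for the mixed sum and the avoidance of Minkowski sums of non-convex regions, a point the paper passes over quickly; what it costs is that you must supply the meaning of ``mixed area'' yourself, and you are right to flag the normalization --- on the diagonal the sum equals $2\,\mathrm{Area}$, so the theorem implicitly uses the convention in which the mixed area of a region with itself is twice its area. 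Your closing remark that the mixed area is recovered termwise from matching tiles is a plausibility check rather than a proof (mixed area is not additive under decompositions in general), but since you ultimately \emph{define} the mixed area by the bilinear form, nothing in your argument depends on it.
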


\begin{proof}

Let $X$ denote the Minkowski sum of the $g_1$-triangle and the scaled $g_2$-triangle, $\epsilon \cdot g_2$. Then $X$ is again a $g$-triangle for some function $g$, whose support function in direction $(a, b)$ is given by
\[
\beta_{a, b} = \gamma_{a, b} + \epsilon \delta_{a, b}.
\]
Hence, applying the area formula from Theorem~\ref{thm_short}, we compute:
\[
\mathrm{Area}(X) = \frac{1}{2} \sum^{+} (\beta_{a, b} + \beta_{c, d} - \beta_{a+c, b+d})^2.
\]
Substituting $\beta_{a, b} = \gamma_{a, b} + \epsilon \delta_{a, b}$, we obtain:
\begin{align*}
\mathrm{Area}(X) &= \frac{1}{2} \sum^{+} \left( \gamma_{a, b} + \gamma_{c, d} - \gamma_{a+c, b+d} + \epsilon (\delta_{a, b} + \delta_{c, d} - \delta_{a+c, b+d}) \right)^2 = \\
&= \frac{1}{2} \sum^{+} \left[ (\gamma_{a, b} + \gamma_{c, d} - \gamma_{a+c, b+d})^2 + \epsilon^2 (\delta_{a, b} + \delta_{c, d} - \delta_{a+c, b+d})^2 \right. \\
&\qquad\left. + 2\epsilon (\gamma_{a, b} + \gamma_{c, d} - \gamma_{a+c, b+d})(\delta_{a, b} + \delta_{c, d} - \delta_{a+c, b+d}) \right].
\end{align*}
Thus, the coefficient of $\epsilon$ in this expression is
\[
\sum^{+} (\gamma_{a, b} + \gamma_{c, d} - \gamma_{a+c, b+d})(\delta_{a, b} + \delta_{c, d} - \delta_{a+c, b+d}),
\]
which equals the mixed area of the $g_1$-triangle and the $g_2$-triangle, as desired.

\end{proof}

\begin{remark}
Hata's formula \eqref{eq_hata} is obtained when we take two parabolas $y=x^2$ and $x=y^2$. One considers multiple $\Gamma$-triangles which curvilinear sides are segments of parabolas corresponding to slopes in intervals $[\frac{1}{n+1},\frac{1}{n}], n=1,2, \dots$.
\end{remark}

\section{Discussion}
\label{sec_disc}

{\bf Reduced matrices, Selberg trace formula}. Beyond the well-known links with Farey and continued fractions, a third classical
viewpoint is \emph{reduction theory} in $SL(2, \mathbb{Z})$. 
Let
\[
SL_{+}(2,\mathbb{Z})\;=\;\Bigl\{
\begin{pmatrix} a & b \\ c & d \end{pmatrix}\in SL(2, \mathbb{Z})
:\ a,b,c,d\ge 0\Bigr\}
\]
be the ``positive part’’ of $SL(2,\ZZ)$. Right multiplication by the
shift matrix
\[
T=\begin{pmatrix}1&1\\[1pt]0&1\end{pmatrix}
\]
gives a natural bijection between $SL_{+}(2,\mathbb{Z})$ and the set of
\emph{reduced matrices}
\[
\mathrm{Red}\;=\;\Bigl\{
\begin{pmatrix} a & b \\ c & d \end{pmatrix}\in SL(2, \mathbb{Z})
:\ b\ge a\ge 0,\ d\ge c\ge 0\Bigr\}.
\]
Reduced matrices correspond to Farey pairs and appear in this role in Theorem~\ref{thm_convergence}. Two key (classical) properties of reduced matrices are:

\begin{enumerate}
\item (\emph{Unique factorisation / continued fractions}) Every
$\gamma\in\mathrm{Red}$ is uniquely a product
\[
\gamma=\begin{pmatrix}0&1\\[1pt]1&n_1\end{pmatrix}
\begin{pmatrix}0&1\\[1pt]1&n_2\end{pmatrix}
\cdots
\begin{pmatrix}0&1\\[1pt]1&n_k\end{pmatrix},
\qquad n_i\in\mathbb{Z}_{>0},
\]
which encodes the continued fraction data.\vspace{2pt}
\item (\emph{Reduction of hyperbolic elements}) Every hyperbolic
$\gamma\in SL(2, \mathbb{Z})$ is conjugate (up to sign) to at least one and only
finitely many elements of $\mathrm{Red}$.
\end{enumerate}

In \cite{LewisZagier1996,Mayer1991} the Selberg zeta function is expressed as
\(
Z_\Gamma(s)=\det(1-L_s)
\),
where \(L_s\) is Mayer’s transfer operator acting on a Banach space \(V\) of
holomorphic functions (e.g.\ functions holomorphic on a certain disk \(D\) with the maps
\(z\mapsto (az+b)/(cz+d)\) preserving \(D\)). Concretely,
\[
(L_s f)(z)
 \;=\;
 \sum_{n=1}^{\infty} (z+n)^{-2s}\,
 f\!\left(\frac{1}{\,z+n\,}\right),
 \qquad \Re(s)>\tfrac12.
\]
Equivalently, if \(\pi_s\) denotes the (right) action
\[
(\pi_s(\gamma)f)(z)= (cz+d)^{-2s}\,f\!\left(\frac{az+b}{cz+d}\right),
\qquad
\gamma=\begin{pmatrix}a&b\\ c&d\end{pmatrix}\in SL(2, \mathbb{Z}),
\]
and if
\[
A=\begin{pmatrix}1&1\\[1pt]0&1\end{pmatrix},
\qquad
P=\begin{pmatrix}0&1\\[1pt]1&1\end{pmatrix},
\qquad
L \;=\; (1-[A])^{-1}[P]
 \;=\; \sum_{n=1}^{\infty} \Bigl[\begin{pmatrix}1&n\\[1pt]0&1\end{pmatrix}\Bigr]
\]
in the group ring, then \(L_s=\pi_s(L)\) and \(Z_\Gamma(s)=\det(1-L_s)\). This identity
interfaces with the Selberg trace formula by identifying the logarithmic derivative
of \(Z_\Gamma(s)\) with both the spectral sum over Laplace eigenvalues and the
geometric sum over the length spectrum of closed geodesics on \(\mathbb{H}/{GL(2,\ZZ)}\).

In our article, the 
``positive part’’ of $SL(2,\ZZ)$ is used to index summands. Under the above bijection,
\emph{reduced matrices} correspond canonically to the $\Gamma$-triangles
appearing in  Section~\ref{sec_3}. Moreover, the transfer
operators in \cite{LewisZagier1996,Mayer1991}, e.g.\ Mayer’s operator
$L_s f(z)=\sum_{n\ge1}(z+n)^{-2s}f(\tfrac{1}{z+n})$, are structurally close to
our telescoping terms $S^L,S^R$ for $s=1$ (see Section~\ref{sec_3}). Perhaps, interpretation of our function $F_\Gamma(s)$ for $s\ne 1,2$ could be obtained if the connection of $F_\Gamma(s)$ to Selberg trace formula is clarified.

{\bf  Abelianisation and a $12$-fold decomposition.} The abelianisation of $SL(2, \mathbb{Z})$ is cyclic of order $12$:
\[
SL(2, \mathbb{Z})^{\mathrm{ab}}
\;\cong\; SL(2, \mathbb{Z})/[SL(2, \mathbb{Z}),SL(2, \mathbb{Z})]
\;\cong\; \mathbb{Z}/12\mathbb{Z}.
\]
Composing our indexing of summands by $\gamma\in SL_+(2,\mathbb{Z})$ with the
abelianisation map $\,\mathrm{ab}:SL(2, \mathbb{Z})\to\mathbb{Z}/12\mathbb{Z}\,$
splits any global sum into a direct sum of $12$ arithmetic pieces (cf. Definition~\ref{def_zeta}):
\[
F_g(s)
\;=\;\sum_{r\in\mathbb{Z}/12\mathbb{Z}}\;
F_g^{(r)}(s),
\qquad
F_g^{(r)}(s)
\;=\;\!\!\sum_{\substack{\gamma\in SL_+(2,\mathbb{Z})\\ \mathrm{ab}(\gamma)=r}}
\!\!f_g(\gamma)^s.
\]
It would be interesting to study them, for such a $12$-fold splitting can reveal
finer cancellation patterns and symmetry. It may also interact cleanly with continued-fraction
factorisations, since the image of $\gamma$ in $\mathbb{Z}/12\mathbb{Z}$ is
additive under concatenation of the elementary factors
$\begin{pmatrix}0&1\\[1pt]1&n\end{pmatrix}$.

{\bf Finite-field analogues and supercongruences.}

A natural direction is to pass to finite fields. Replace $\mathbb{Z}$ by $\mathbb{F}_q$ (with $q=p^k$) and consider the action of $SL(2, \mathbb{F}_q)$ on the projective line $P^1(\mathbb{F}_q)$.
Given homogeneous classes $(a\!:\!b),(c\!:\!d)\in P^1(\mathbb{F}_q)$ with $ad-bc=1$ in $\mathbb{F}_q$, one can view the ordered pair as an oriented $1$-simplex. 
This suggests seeking a character–sum or trace–operator avatar over $\mathbb{F}_q$ of the geometric “telescoping” we exploit over $\ZZ$. Maybe similar telescoping exists for  the spherical Tits building for $SL(2,\mathbb Q_p)$ for the case of $p$-adic numbers.

When the archimedean identities admit $p$-adic lifts, e.g. the Gross–Koblitz formula expressing Gauss sum via $p$-adic gamma function, one expects \emph{supercongruences} for the finite-field counterparts. It is natural to ask whether Farey--Legendre telescoping, after a suitable mod--$p$ interpretation, yields families of sums whose values modulo $p^k$ mirror the archimedean identities to high $p$-adic precision.

I am grateful to the University of Geneva, where this work was carried out, I thank Misha Shkolnikov and Ernesto Lupercio for inspiring discussions and an anonymous referee for comments.


\end{document}